\newcolumntype{P}[1]{>{\centering\arraybackslash}p{#1}}
\newtheorem{theorem}{Theorem}[section]
\newtheorem{lemma}[theorem]{Lemma}
\newtheorem{proposition}[theorem]{Proposition}
\newtheorem*{theoremx}{Theorem}
\theoremstyle{definition}
\newtheorem{remark}[theorem]{Remark}
\newtheorem{example}[theorem]{Example}
\newcommand{\kk}{\Bbbk}
\def\l@subsection{\@tocline{2}{0pt}{2.5pc}{5pc}{}}
\author{Alessio Caminata}
\address{Dipartimento di Matematica, Dipartimento di Eccellenza 2023-2027, Universit\`a di Genova, via Dodecaneso 35, 16146, Genova, Italy}
\email{alessio.caminata@unige.it}
\author{Enrico Carlini}
\address{DISMA-Dipartimento di Scienze Matematiche, Politenico di Torino, Corso Duca degli Abruzzi 24, 10129, Torino}
\email{enrico.carlini@polito.it}
\author{Luca Schaffler}
\address{Dipartimento di Matematica e Fisica, Universit\`a degli Studi Roma Tre, Largo San Leonardo Murialdo 1, 00146, Roma, Italy}
\email{luca.schaffler@uniroma3.it}
\title[Simplices osculating rational normal curves]{Simplices osculating rational normal curves}
\subjclass{14A25, 14H50, 51N35}
\keywords{rational normal curve, twisted cubic, point configuration}
\begin{document}
	
	\maketitle
	
\begin{abstract}
A classical result of von Staudt states that if eight planes osculate a twisted cubic curve and we divide them into two groups of four, then the eight vertices of the corresponding tetrahedra lie on a twisted cubic curve. In the current paper, we give an alternative proof of this result using modern tools, and at the same time we prove the analogous result for rational normal curves in any projective space. This higher dimensional generalization was claimed without proof in a paper of H.S. White in 1921.
\end{abstract}

\section{Introduction}

Studying the existence of rational normal curves satisfying a given set of conditions is a classical and widely studied problem. The basic instance of this is Castelnuovo's lemma: $d+3$ linearly general points in $\mathbb{P}^d$ lie on a unique rational normal curve. Replacing the points with $d+3$ codimension two linear spaces gives rise to a richer situation, existence and non-existence conditions are discussed in \cite{CarliniCatalisanoExistence}. Moreover, the existence of suitable rational normal curves can be used to successfully address postulation problems  (see e.g. \cite{CarliniCatalisanoExistence,CarliniCatalisanoONrnc}).

When more than $d+3$ points are considered, the existence problem becomes more subtle because it involves points in special position. The first instance of this situation is Pascal's theorem \cite{Pas40}, which characterizes the choices of six points on the plane lying on a conic. In relation to this, but from a different perspective, we have a theorem of Brianchon \cite[p.~35]{Bri17}, which asserts that if we take six tangent lines to a plane conic and we divide them into two groups of three, then the six vertices of the corresponding triangles also lie on a conic.  This is pictured in Figure~\ref{fig:Brianchon-result}. An alternative proof of Brianchon's theorem can be found in \cite{CarliniCatalisanoFavacchioGuardo}.

\begin{figure}
\begin{tikzpicture}[scale=0.55]
\draw [rotate around={0.:(0.,0.)},line width=1.5pt] (0.,0.) ellipse (5.cm and 4.cm);
\draw [rotate around={0.:(0.,0.)},line width=1.5pt,dash pattern=on 4pt off 4pt] (0.,0.) ellipse (8.333333333333334cm and 2.5cm);
\draw [line width=1pt] (-8.333333333333334,0.) circle (0.15cm);
\draw [line width=1pt] (-5.,2.) circle (0.15cm);
\draw [line width=1pt] (-5.,-2.) circle (0.15cm);
\draw [line width=1pt] (5.,-2.) circle (0.15cm);
\draw [line width=1pt] (8.333333333333334,0.) circle (0.15cm);
\draw [line width=1pt] (5.,2.) circle (0.15cm);
\draw (-4,4.4) node[anchor=north west] {$P_1$};
\draw (-6.4,0.5) node[anchor=north west] {$P_2$};
\draw (-4,-3.1) node[anchor=north west] {$P_3$};
\draw (3,-3.1) node[anchor=north west] {$P_1'$};
\draw (5,0.5) node[anchor=north west] {$P_2'$};
\draw (3,4.4) node[anchor=north west] {$P_3'$};
\draw (-6.3,3.0) node[anchor=north west] {$R_1$};
\draw (-9.2,-0.2) node[anchor=north west] {$R_2$};
\draw (-6.2,-2) node[anchor=north west] {$R_3$};
\draw (5,-2) node[anchor=north west] {$R_1'$};
\draw (7.9,-0.2) node[anchor=north west] {$R_2'$};
\draw (5.0,3.0) node[anchor=north west] {$R_3'$};
\draw [line width=1pt] (-0.8574929257125442,5.514495755427527)-- (9.190826259045885,-0.5144957554275305);
\draw [line width=1pt] (0.8574929257125441,5.514495755427527)-- (-9.190826259045885,-0.5144957554275305);
\draw [line width=1pt] (-9.190826259045885,0.5144957554275305)-- (0.8574929257125442,-5.514495755427527);
\draw [line width=1pt] (-0.8574929257125441,-5.514495755427527)-- (9.190826259045885,0.5144957554275305);
\draw [line width=1pt] (5.,-3.)-- (5.,3.);
\draw [line width=1pt] (-5.,3.)-- (-5.,-3.);
\begin{scriptsize}
\draw [fill=black] (5.,0.) circle (3.5pt);
\draw [fill=black] (-3.,3.2) circle (3.5pt);
\draw [fill=black] (-5.,0.) circle (3.5pt);
\draw [fill=black] (-3.,-3.2) circle (3.5pt);
\draw [fill=black] (3.,-3.2) circle (3.5pt);
\draw [fill=black] (3.,3.2) circle (3.5pt);
\end{scriptsize}
\end{tikzpicture}
\caption{Brianchon's theorem on six points on a smooth conic.}
\label{fig:Brianchon-result}
\end{figure}
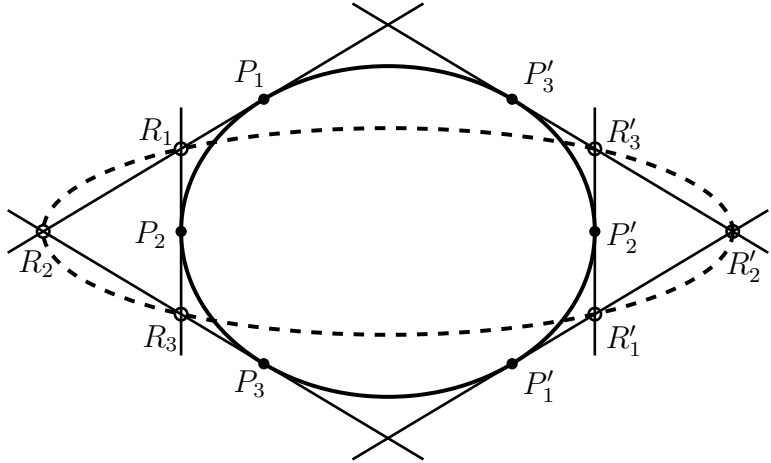

This result was generalized by von Staudt \cite[Satz~588, p.~377]{Sta56} to twisted cubic curves as follows. Consider eight osculating planes to a twisted cubic curve and divide them into two groups of four. Then, the eight vertices of the corresponding two tetrahedra lie on a twisted cubic curve. The idea of the proof is that the two tetrahedra are polar with respect to an appropriately constructed quadric. Hurwitz \cite{Hur82} gave an alternative geometric proof of von Staudt's result by providing a synthetic construction of the second twisted cubic curve. This was achieved using suitable pencils of planes and applying the fact that a degree $3$ space curve intersects a plane at three points. H.S. White \cite{Whi21} gave an alternative algebraic proof. The argument also used polarity as von Staudt, but the approach is based on algebraic coordinates rather than synthetic geometric arguments. In \cite{Whi21} it is also stated that the theorem is \textit{\enquote{extensible at once to norm-curves in any number of dimension.}} While the statement can be naturally generalized to rational normal curves of degree $d$ in projective space $\mathbb{P}^d$ for any $d\geq 3$ (see Theorem~\ref{thm:gwt}), a proof is not provided in the above references. Moreover, even for the case $d=3$ the proofs in \cite{Hur82, Sta56, Whi21} are written in a classical mathematical language that is not easily accessible nowadays.
%Hurwitz provides a synthetic construction of the second twisted cubic curve by using a pencil of planes and repeatedly applying the property that a degree 3 space curve intersects a plane at three points. Von Staudt uses the fact that a twisted cubic is cut out by quadrics and proves the statement by showing that the two tetrahedra are polar with respect to an appropriately constructed quadric. White's proof also uses polarity, but his approach is based on algebraic coordinates rather than synthetic geometric arguments.
In \cite[\S\,4]{Whi27} F.P. White also studied the geometry of osculating simplices to arbitrary rational normal curves, but in a different direction. In the current paper, our goal is to prove Brianchon's and von Staudt's theorems and their higher dimensional generalization, stated below, in modern terms.

\begin{theoremx}[Theorem~\ref{thm:gwt}]
Let $d\geq2$ and let $\mathcal{C}\subseteq\mathbb{P}^d$ be a rational normal curve. Consider distinct points $P_i,P_i'\in \mathcal{C}$ for $1\leq i\leq d+1$ and let $\pi_i$, respectively $\pi_i'$, be the osculating hyperplanes to $\mathcal{C}$ at $P_i$, respectively $P_i'$. Define
\[
R_i=\bigcap_{j\neq i} \pi_j \mbox{ and }R_i'=\bigcap_{j\neq i} \pi_j'.
\]
Then, the points $R_1,\ldots,R_{d+1},R_1',\ldots,R_{d+1}'$ lie on a rational normal curve.
\end{theoremx}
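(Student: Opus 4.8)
The plan is to translate the entire configuration into polynomial data on the parameter line and then to produce the desired curve by an explicit parametrization. First I would fix an isomorphism $\mathbb{P}^1\xrightarrow{\sim}\mathcal{C}$. Since the osculating hyperplanes of $\mathcal{C}$ themselves trace out a rational normal curve in the dual space $(\mathbb{P}^d)^\vee$, I can choose homogeneous coordinates $x_0,\dots,x_d$ and parametrize $\mathcal{C}$ so that the osculating hyperplane at the point of parameter $p$ is $\{\sum_{k=0}^d p^k x_k=0\}$. As the $2(d+1)$ points are distinct, a projectivity of $\mathbb{P}^1$ lets me assume all parameters $p_1,\dots,p_{d+1},p_1',\dots,p_{d+1}'\in\kk$ are finite and pairwise distinct. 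Identifying $\mathbb{P}^d$ with $\mathbb{P}(\kk[z]_{\le d})$ via $[x_0:\dots:x_d]\mapsto\big[\sum_k x_k z^k\big]$, a point $[h]$ lies on $\pi_j$ exactly when $h(p_j)=0$. Hence $R_i=\bigcap_{j\ne i}\pi_j$ is the unique polynomial of degree $\le d$ (up to scalar) vanishing at all $p_j$ with $j\ne i$, so $R_i=\big[\prod_{j\ne i}(z-p_j)\big]$ and likewise $R_i'=\big[\prod_{j\ne i}(z-p_j')\big]$. This already guarantees that each $R_i$ is a single well-defined point and that $R_1,\dots,R_{d+1}$ are projectively independent, being nonzero multiples of the Lagrange basis polynomials for the nodes $p_1,\dots,p_{d+1}$.

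The heart of the argument is to exhibit one rational normal curve through all $2(d+1)$ points. I would work in the \emph{Lagrange coordinates} attached to the nodes $p_1,\dots,p_{d+1}$, in which a polynomial $h$ is recorded by $(h(p_1),\dots,h(p_{d+1}))$; here $R_1,\dots,R_{d+1}$ are the coordinate points and $R_i'=\big[\,(\prod_{j\ne i}(p_k-p_j'))_{k=1}^{d+1}\,\big]$. Set $D_k=\prod_{j=1}^{d+1}(p_k-p_j')$ and consider the map $\nu\colon\mathbb{P}^1\to\mathbb{P}^d$ whose $k$-th coordinate is $\nu(\theta)_k=D_k\prod_{l\ne k}(\theta-p_l)$. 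Because the $p_l$ are distinct and each $D_k$ is nonzero (the two groups being disjoint), the $d+1$ coordinate functions are linearly independent polynomials of degree $d$; thus $\nu$ differs from the Veronese map $\theta\mapsto[1:\theta:\dots:\theta^d]$ by a linear automorphism of $\mathbb{P}^d$, and its image is a rational normal curve. It then remains to verify incidence: at $\theta=p_m$ every coordinate but the $m$-th vanishes, giving $\nu(p_m)=R_m$, while at $\theta=p_m'$ a short manipulation of the factored products gives $\nu(p_m')_k=\big(-\prod_{l=1}^{d+1}(p_m'-p_l)\big)\prod_{j\ne m}(p_k-p_j')$, so that $\nu(p_m')=R_m'$ up to the common nonzero scalar $-\prod_l(p_m'-p_l)$.

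I expect the choice of the scalars $D_k$ to be the crux. The rational normal curves through the independent points $R_1,\dots,R_{d+1}$ form a family of dimension $2d-2$, whereas passing through the remaining points $R_1',\dots,R_{d+1}'$ imposes $(d+1)(d-1)$ conditions, an excess of $(d-1)^2$; so for $d\ge2$ the existence of a common curve is a genuine constraint rather than a matter of dimension count. The naive candidate through the first simplex --- the Lagrange interpolation curve obtained by taking all $D_k=1$ --- does not pass through the second simplex, and the content of the theorem is precisely that the single symmetric choice $D_k=\prod_j(p_k-p_j')$ repairs all $(d-1)^2$ failures at once. Once this curve is written down the remaining verifications are routine, and specializing to $d=2$ and $d=3$ recovers the theorems of Brianchon and von Staudt.
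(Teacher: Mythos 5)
Your proposal is correct, and it takes a genuinely different route from the paper. The paper never writes down the second curve: it first proves that $R_1,\dots,R_{d+1},R_1',\dots,R_{d+1}'$ are in general linear position, then reduces the theorem to verifying the determinantal equations $\psi_{I,J}$ cutting out $W_{d,2d+2}$ (via Proposition~\ref{prop:gwt+W} and Lemma~\ref{prop:pts-glp-rnc-Wdn}), and finally checks those equations through the bracket factorization of Lemma~\ref{lem:bracket-factorization} together with a delicate sign and multiplicity count in Proposition~\ref{prop:Wdn-equations-are-satisfied}. You instead exhibit the curve explicitly: identifying $\mathbb{P}^d$ with $\mathbb{P}(\kk[z]_{\le d})$ so that $\pi_j=\{h\mid h(p_j)=0\}$ and $R_i=\bigl[\prod_{j\ne i}(z-p_j)\bigr]$, passing to Lagrange coordinates at the nodes $p_1,\dots,p_{d+1}$, and checking that $\nu(\theta)_k=D_k\prod_{l\ne k}(\theta-p_l)$ with $D_k=\prod_{j}(p_k-p_j')$ meets $R_m$ at $\theta=p_m$ and $R_m'$ at $\theta=p_m'$. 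I verified the crucial identity: $(p_k-p_m')\prod_{l\ne k}(p_m'-p_l)=-\prod_{l=1}^{d+1}(p_m'-p_l)$ is indeed independent of $k$, the scalars $D_k$ and all coordinates of each $R_m'$ are nonzero because the two parameter sets are disjoint, and the components of $\nu$ form a basis of the degree-$d$ forms, so its image is a rational normal curve; the reduction to coordinates in which the osculating hyperplane at parameter $p$ is $\{\sum_k p^kx_k=0\}$ is legitimate since the dual curve of osculating hyperplanes is itself a rational normal curve. Your argument is shorter, self-contained (no appeal to $V_{d,n}$, $W_{d,n}$, or Castelnuovo-type results), and constructive --- it produces an explicit parametrization of the second curve, in the spirit of Hurwitz's synthetic construction. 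What the paper's longer route buys is the factorization of the brackets $|R_{k_1}\ldots R_{k_{d+1}}|$ into two-dimensional brackets $|Q_iQ_j|$, which is of independent interest and showcases how the equations of $W_{d,n}$ can be verified in practice.
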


This recovers Brianchon's theorem when $d=2$ and von Staudt's result when $d=3$. In the latter case, we remark that the above theorem is rather the projective dual of the statement in \cite{Sta56,Hur82,Whi21}, which is more convenient for our purposes.

The first main idea behind the proof is to consider $V_{d,n}\subseteq(\mathbb{P}^d)^n$, which is the Veronese compactification of the parameter space of $n$-point configurations in $\mathbb{P}^d$ that can lie on a rational normal curve. In \cite{CGMS18} the authors constructed a set of multi-homogeneous equations defining a variety $W_{d,n}\subseteq(\mathbb{P}^d)^n$ containing $V_{d,n}$. We recall these equations in \S\,\ref{sec:Vdn-and-equations}. The variety $W_{d,n}$ coincides with $V_{d,n}$ in some cases, but it is in general a larger space. However, we prove in Proposition~\ref{prop:pts-glp-rnc-Wdn} that for linearly general point configurations, the equations defining $W_{d,n}$ characterize when the configuration lies on a rational normal curve. This is the first ingredient of the proof of our main result. As the points $R_i,R_i'$, $i=1,\ldots,d+1$, are linearly general by Lemma~\ref{prop:tetrahedrons-glp}, our main goal becomes to show that the points $R_i,R_i'$ satisfy the equations of $W_{d,2d+2}$. This is a subtle algebro-combinatorial argument which we carry out in \S\,\ref{sec:proof-gen-vonStaudt}.

We work over an algebraically closed field $\kk$ of characteristic $0$ or larger than $d$.
	
	%-------------------------------------------------------------------------------------------
	
	\section{Points on rational normal curves}
	
	Let $d,n$ be positive integers satisfying $n\geq d+4$ and $d\geq2$.
	
	%---------------------------------------------
	
	\subsection{The space $V_{d,n}$ and its defining equations}
	\label{sec:Vdn-and-equations}
	
	A \emph{rational normal curve} in $\mathbb{P}^d$ is a projective degree $d$ irreducible smooth rational curve. Let us recall the definition of $V_{d,n}$ \cite[Definition~2.1]{CGMS18}, which is the Zariski closure in $(\mathbb{P}^d)^n$ of the following set:
	\[
	\{(p_1,\ldots,p_n)\in(\mathbb{P}^d)^n\mid p_1,\ldots,p_n~\textrm{are distinct and lie on a rational normal curve}\}.
	\]
	So, $V_{d,n}$ generically parametrizes $n$-tuples of distinct points in $\mathbb{P}^d$ that lie on a rational normal curve. The point configurations parametrized by the boundary of $V_{d,n}$ are either degenerate, i.e. contained in a hyperplane, or they lie on specific flat degenerations of rational normal curves called \emph{quasi-Veronese curves} (points are allowed to collide). By a result of Artin \cite[Lemma~13.1]{Art76}, a quasi-Veronese curve $\mathcal{C}$ is characterized by three conditions: (1) each irreducible component of $\mathcal{C}$ is a rational normal curve in its span, (2) the singularities of $\mathcal{C}$ are \'etale locally the union of coordinate axes in $\kk^m$, and
	(3) any connected closed subcurve of $\mathcal{C}$ is again a quasi-Veronese curve in its span.
	
	It is natural to look for a set of multihomogeneous equations cutting out $V_{d,n}$. This problem was studied in \cite{CGMS18}, where the following equations were considered. Let $[n]:=\{1,\ldots,n\}$ and, for a finite set $X$, denote by $\binom{X}{k}$ the set of $k$-element subsets of $X$. For $J\in\binom{[n]}{d+4}$, $I=\{i_1<\ldots<i_6\}\in\binom{J}{6}$, $J\setminus I=\{j_1<\ldots<j_{d-2}\}$, we define
	\begin{align*}
		\psi_{I,J}:=&|i_4i_5i_6j_1\ldots j_{d-2}||i_2i_3i_6j_1\ldots j_{d-2}||i_1i_3i_5j_1\ldots j_{d-2}||i_1i_2i_4j_1\ldots j_{d-2}|\\
		-&|i_3i_5i_6j_1\ldots j_{d-2}||i_2i_4i_6j_1\ldots j_{d-2}||i_1i_4i_5j_1\ldots j_{d-2}||i_1i_2i_3j_1\ldots j_{d-2}|,
	\end{align*}
	where $|i_4i_5i_6j_1\ldots j_{d-2}|$ denotes the determinant of the $(d+1)\times(d+1)$-matrix whose columns are the coordinates of the points $p_{i_4},p_{i_5},p_{i_6},p_{j_1},\ldots,p_{j_{d-2}}$, and so on. It is customary to refer to these determinants as \emph{brackets} \cite[\S\,3]{Stu08} and to think of $\psi_{I,J}$ as the difference of two \emph{bracket monomials}. Note that the vanishing of $\psi_{I,J}$ at a given point configuration is independent from the choice of the projective coordinates of the points because each index $i_h$ appears exactly twice in each bracket monomial and each $j_h$ appears in all the brackets.

We let $W_{d,n}$ be the subscheme of $(\mathbb{P}^d)^n$ defined by the equations $\psi_{I,J}$ as $I,J$ vary. We also introduce $Y_{d,n}\subseteq(\mathbb{P}^d)^n$ as the subscheme cut out by the maximal minors of the $(d+1)\times n$ matrix of variables of $(\mathbb{P}^d)^n$. Equivalently, $Y_{d,n}$ parametrizes $n$-point configurations in $\mathbb{P}^d$ which lie on a hyperplane. We have that $Y_{d,n}\subseteq W_{d,n}$. The next theorem summarizes Theorem~3.6, Theorem~4.19, Theorem~4.22, and Proposition~4.25 from \cite{CGMS18}.
	
	\begin{theorem}[{\cite{CGMS18}}]\label{thm:equationsVdn}
		The following set-theoretic equalities hold:
		\begin{enumerate}
			
			\item $W_{d,d+4}=V_{d,d+4}\cup Y_{d,d+4}$ for all $d\geq2$;
			
			\item $W_{3,n}=V_{3,n}\cup Y_{3,n}$ for all $n\geq7$;
			
			\item $W_{d,n}=V_{d,n}$ if and only if $d=2$ and $n\geq6$, or $(d,n)=(3,7),(3,8),(4,8)$.
			
		\end{enumerate}
	\end{theorem}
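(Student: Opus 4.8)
The plan is to split each statement into the two trivial inclusions and the substantive reverse inclusion. For all $d,n$ one has $V_{d,n}\subseteq W_{d,n}$ and $Y_{d,n}\subseteq W_{d,n}$: on a degenerate configuration every bracket vanishes (any $d+1$ of the coordinate vectors are dependent), so each $\psi_{I,J}$ vanishes trivially; on a configuration lying on a rational normal curve I would substitute the parametrization $t\mapsto[1:t:\cdots:t^d]$, observe that each bracket becomes a product of Vandermonde differences, and check that the two bracket monomials of $\psi_{I,J}$ yield the same product and hence cancel (this is where the hypothesis on the characteristic is used). Thus $V_{d,n}\cup Y_{d,n}\subseteq W_{d,n}$ always, so the content of (1) and (2) is the reverse inclusion, while (3) becomes a comparison of the loci $V_{d,n}$ and $Y_{d,n}$.

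For (1) the governing tool is Gale duality (association). A nondegenerate configuration of $n=d+4$ points in $\mathbb{P}^d$ is Gale dual to a configuration of $d+4$ points in $\mathbb{P}^{n-d-2}=\mathbb{P}^2$, and under this duality a bracket $|S|$ (with $|S|=d+1$) equals, up to one fixed nonzero scalar, the complementary bracket of the dual planar configuration on the three indices $[d+4]\setminus S$. Substituting this into $\psi_{I,J}$, each bracket $|i_ai_bi_cj_1\cdots j_{d-2}|$ has complement $I\setminus\{i_a,i_b,i_c\}$, so after a bookkeeping of signs the two monomials of $\psi_{I,J}$ turn into the two monomials of the planar expression $\psi_{I,I}$ evaluated on the six dual points indexed by $I$. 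Hence a nondegenerate configuration lies in $W_{d,d+4}$ exactly when every six-element subset of its Gale dual satisfies the classical planar conic relation. The base case is $d=2$, $n=6$, where $\psi_{[6],[6]}=0$ is precisely the condition that six points of $\mathbb{P}^2$ lie on a (possibly degenerate) conic, i.e. the configuration lies in $V_{2,6}\cup Y_{2,6}$. Transporting this back through Gale duality, together with the classical fact that a rational normal curve is associated to a conic, yields $W_{d,d+4}=V_{d,d+4}\cup Y_{d,d+4}$.

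For (2) I would induct on $n$, starting from $W_{3,7}=V_{3,7}\cup Y_{3,7}$ supplied by (1). The key structural observation is that membership in $W_{d,n}$ is a condition on subconfigurations: the equations with a fixed $J$ are exactly the defining equations of $W_{d,d+4}$ on the points indexed by $J$, so a configuration lies in $W_{d,n}$ if and only if each of its $(d+4)$-element subconfigurations lies in $V_{d,d+4}\cup Y_{d,d+4}$. Given a configuration in $W_{3,n}$, the first $n-1$ points lie, by induction, on a twisted cubic (or quasi-Veronese curve) $\mathcal{C}$, or in a plane. In the nondegenerate branch I would use that a twisted cubic is determined by any six of its points in linearly general position: for the remaining point $p_n$, a suitable $7$-subset consisting of $p_n$ together with six general points of $\mathcal{C}$ lies on a twisted cubic, which must coincide with $\mathcal{C}$, forcing $p_n\in\mathcal{C}$; the degenerate alternative for that subset is excluded because six general points of $\mathcal{C}$ are not coplanar. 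The degenerate branch, where the first $n-1$ points are coplanar, is treated separately and places the whole configuration in $Y_{3,n}$ or again in $V_{3,n}$.

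Finally, for (3) I would reduce the equality $W_{d,n}=V_{d,n}$ to the containment $Y_{d,n}\subseteq V_{d,n}$, which by (1), (2), and the analogous propagation for $d=2$ is all that remains once $W_{d,n}=V_{d,n}\cup Y_{d,n}$ is known. A dimension count gives $\dim V_{d,n}=d^2+2d-3+n$ (the family of rational normal curves has dimension $(d+1)^2-1-3$, plus $n$ for the points) and $\dim Y_{d,n}=d+n(d-1)$, so $\dim Y_{d,n}-\dim V_{d,n}=n(d-2)-(d^2+d-3)$, which is negative precisely for $d=2$ (all $n$), for $(3,7),(3,8)$, and for $(4,8)$, i.e. exactly the listed pairs. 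For the "only if" direction, whenever $(d,n)$ is not on the list one has $\dim Y_{d,n}\geq\dim V_{d,n}$; since both loci are irreducible and a generic point of $V_{d,n}$ spans $\mathbb{P}^d$ whereas a generic point of $Y_{d,n}$ does not, the two are distinct, hence $Y_{d,n}\not\subseteq V_{d,n}$ and $W_{d,n}\supsetneq V_{d,n}$. For the "if" direction I would verify $Y_{d,n}\subseteq V_{d,n}$ in each listed case by exhibiting the degenerate configuration as a flat limit of configurations on rational normal curves: for $d=2$ the $n$ collinear points lie on a double line, itself a limit of smooth conics; for $(3,7)$ and $(3,8)$ the $n$ coplanar points lie on a singular, hence rational, plane cubic that arises as a limit of twisted cubics; and for $(4,8)$ one transports the planar statement through Gale duality. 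I expect the main obstacle to be precisely this control of the boundary of $V_{d,n}$ in the "if" direction, together with the degenerate branch of the propagation in (2): one must distinguish genuine quasi-Veronese degenerations from planar ones and confirm that the relevant singular plane cubics actually occur as limits of rational normal curves, while the sign bookkeeping in the Gale-duality step of (1), though routine, must be carried out carefully so that the two bracket monomials match the planar conic relation rather than differing by a sign.
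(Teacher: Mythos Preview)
The paper does not supply its own proof of this theorem: it is quoted as a summary of Theorem~3.6, Theorem~4.19, Theorem~4.22, and Proposition~4.25 of \cite{CGMS18} and is used as a black box. So there is no in-paper argument to compare against; your proposal is an attempted reconstruction of the proofs in \cite{CGMS18}, not of anything the present paper does.

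That said, the architecture of your sketch is broadly the one used in \cite{CGMS18}: (1) is reduced via Gale duality to the planar conic case, and (2) is obtained by induction on $n$ using that the equations of $W_{d,n}$ restrict, for each $J\in\binom{[n]}{d+4}$, to the equations of $W_{d,d+4}$ on the corresponding subconfiguration. Your handling of (3), however, has a genuine gap. In the ``if'' direction for $(d,n)=(3,7),(3,8)$ you claim $Y_{3,n}\subseteq V_{3,n}$ by placing the coplanar points on a singular rational plane cubic that ``arises as a limit of twisted cubics''. But an irreducible plane cubic in $\mathbb{P}^3$ has Hilbert polynomial $3t$, not $3t+1$, so it is \emph{not} a flat limit of twisted cubics, and it is not a quasi-Veronese curve either (its span is a plane, while a rational normal curve in a plane has degree~$2$). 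The inclusion $Y_{3,7},Y_{3,8}\subseteq V_{3,7},V_{3,8}$ is true, but one has to argue with limits of point configurations (or with genuine quasi-Veronese degenerations such as a conic together with a transversal line not contained in its plane), not with plane cubics as stated. The degenerate branch of your induction in (2) is similarly asserted rather than argued, and this is precisely where the boundary analysis becomes delicate; the paper's own Lemma~\ref{prop:pts-glp-rnc-Wdn} illustrates the kind of quasi-Veronese bookkeeping that is actually needed there.
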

	
	\begin{remark}
		Notice that the equations defining $W_{d,n}$ in \cite{CGMS18} differ by a sign with respect to the ones we adopt in this section and in the rest of the paper. This was discussed in \cite[\S\,4]{CS21}
	\end{remark}

\begin{remark}	
One can ask whether the equality $W_{d,n}=V_{d,n}\cup Y_{d,n}$ holds, at least set-theoretically. In \cite{CGMS18} it is conjectured that this is the case. We observe that the containment $W_{d,n}\supseteq V_{d,n}\cup Y_{d,n}$ is true scheme-theoretically by \cite[Lemma~4.17 and Lemma~4.18]{CGMS18}.
\end{remark}
	
	\begin{example}
		Let us study the specific case of $d=3$ and $n=8$. In this case we have $56$ equations defining $W_{3,8}=V_{3,8}$. We can construct the equations in the following way. First, choose $J\in\binom{[8]}{7}$, say $J=\{1,\ldots,7\}$. Then, choose $I\in\binom{[J]}{6}$, say $I=\{1,\ldots,6\}$. Then,
		\[
		\psi_{I,J}= |4567||2367||1357||1247| - |3567||2467||1457||1237|=0.
		\]
	\end{example}
	
	%---------------------------------------------
	
\subsection{Osculating hyperplanes to rational normal curves}
\label{sec:osculatingplanes}

 We briefly summarize some basic facts about the intrinsic definition of rational normal curves, apolarity, and osculating hyperplanes. The interested reader can refer to \cite{RS2011} and \cite{GeramitaQUEENS} for a more thorough analysis.
	
Let $S=\kk[x_0,x_1]$ and denote by $S_d=\kk[x_0,x_1]_d$ the degree $d$ homogeneous piece of $S$. Consider the map
\[
\iota\colon\mathbb{P}(S_1) \longrightarrow \mathbb{P}(S_d)
\]
defined as $\iota([L])=[L^d]$. The image of $\iota$ is a rational normal curve $\mathcal{C}$ of degree $d$. The osculating hyperplane to $\mathcal{C}$ in a point $P=[L^d]$ is the unique hyperplane intersecting set theoretically $\mathcal{C}$ only in $P$.

To describe the hyperplanes in $\mathbb{P}(S_d)$ we use duality via apolarity theory. Namely, consider the ring $T=\kk[\partial_0,\partial_1]$, where $\partial_i$ is the partial derivative with respect to $x_i$. Consider the natural action of $T_d$ on $S_d$ given by
\begin{align*}
T_d\times S_d&\rightarrow\kk\\
(\partial,F)&\mapsto\partial\circ F,
\end{align*}
where $\partial\circ F=\partial F$ denotes the formal partial derivative $\partial$ (of order $d$) applied to $F$.
The above is called \emph{apolarity action}, and is a perfect pairing, so we can identify the dual of $\mathbb{P}(S_d)$ with $\mathbb{P}(T_d)$ (here we use the hypothesis that the characteristic is $0$ or larger than $d$). More explicitly, a hyperplane in $\mathbb{P}(S_d)$ is the kernel of an element in $\mathbb{P}(T_d)$.
	
Consider a linear form $L=ax_0+bx_1$ and define the corresponding operator $\partial_L=b\partial_0-a\partial_1$. Clearly, $\partial_L\circ L=0$. Hence, the hyperplane defined by $\partial_L^d$ contains the point $P=[L^d]$ and is made of points of the form $[LG]$ for $G\in S_{d-1}$. The hyperplane defined by $\partial_L^d$ only intersects $\mathcal{C}$ in $P=[L^d]$ and thus it is the osculating hyperplane to the rational normal curve at the point.

Another useful approach to osculating hyperplanes is via coordinates. Fix bases so that, $\iota\colon\mathbb{P}^1\hookrightarrow\mathbb{P}^d$ is described as follows:
	\begin{equation}
		\label{eq:standard-rational-normal-curve-for-us}
		\iota([a:b])=\left[\ldots:\binom{d}{i}a^{d-i}b^i:\ldots\right].
	\end{equation}
We call the image of $\iota$ the \emph{standard rational normal curve}. A hyperplane $H(X_0,\ldots,X_d)=0$ is an osculating hyperplane to the standard rational normal curve at the point $\iota([a_0:b_0])$ if and only if the polynomial $H(\iota([a:b]))$ has a root of order $d$ in $[a_0:b_0]$, that is, $H(\iota([a:b]))=c(a_0 b-b_0 a)^d$ for some nonzero constant $c$. After comparing the coefficients of $H(\iota([a:b]))$ with the coefficients of the expansion of $(a_0 b-b_0 a)^d$ we obtain that the hyperplane $H(X_0,\ldots,X_d)$ is given by
	\[
	b_0^dX_0-a_0b_0^{d-1}X_1+\ldots+(-1)^da_0^dX_d=0.
	\]
	
	\begin{remark}
		Often in the literature the standard rational normal curve does not have the binomial coefficients which appear in \eqref{eq:standard-rational-normal-curve-for-us}. We keep them to have a simpler equation for the osculating hyperplane.
	\end{remark}
	
	The osculating hyperplanes to a rational normal curve correspond to points in the dual $\mathbb{P}^{d*}$. By Piene \cite{Piene77}, we have that these points lie on a rational normal curve, and in particular they are in general linear position. Therefore, the original osculating hyperplanes also are linearly general. It is useful to summarize this in the following lemma.
	
	\begin{lemma}
		\label{lemma:inter-osc-planes=empty}
		Let $\mathcal{C}\subseteq\mathbb{P}^d$ be a rational normal curve and let $\pi_1,\dots,\pi_{d+1}$ be the osculating hyperplanes to $\mathcal{C}$ at $d+1$ distinct points. Then, the hyperplanes $\pi_1,\dots,\pi_{d+1}$ are in general linear position, in particular they have empty intersection.
	\end{lemma}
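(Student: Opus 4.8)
The plan is to reduce to the standard rational normal curve and then transport the statement to one about $d+1$ points lying on a \emph{dual} rational normal curve. Since any two rational normal curves in $\mathbb{P}^d$ are projectively equivalent and general linear position of hyperplanes is preserved by elements of $\PGL_{d+1}$, I would first apply a projective transformation to assume that $\mathcal{C}$ is the standard rational normal curve parametrized by \eqref{eq:standard-rational-normal-curve-for-us}. Write $P_i=\iota([a_i:b_i])$, where the parameters $[a_i:b_i]$ are pairwise distinct since the $P_i$ are.

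Next I would feed the explicit equation of the osculating hyperplane derived just above into the apolarity duality. By that computation, the osculating hyperplane $\pi_i$ at $P_i$ is $\sum_{j=0}^{d}(-1)^j a_i^j b_i^{d-j}X_j=0$, so under the identification of hyperplanes in $\mathbb{P}(S_d)=\mathbb{P}^d$ with points of the dual space it corresponds to
\[
Q_i=\left[\ldots:(-1)^j a_i^j b_i^{d-j}:\ldots\right]\in\mathbb{P}^{d*}.
\]
The assignment $[a:b]\mapsto[\ldots:(-1)^j a^j b^{d-j}:\ldots]$ differs from the monomial parametrization $[a:b]\mapsto[\ldots:a^j b^{d-j}:\ldots]$ only by the diagonal coordinate change $X_j\mapsto(-1)^jX_j$, which lies in $\PGL_{d+1}$; hence its image is a rational normal curve $\mathcal{C}^*\subseteq\mathbb{P}^{d*}$, and $Q_1,\ldots,Q_{d+1}$ are $d+1$ distinct points of $\mathcal{C}^*$.

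Then I would invoke the defining linear-algebra property of rational normal curves. The $(d+1)\times(d+1)$ matrix whose rows are the homogeneous coordinates of $Q_1,\ldots,Q_{d+1}$ has determinant equal, up to the fixed sign $\prod_{j=0}^{d}(-1)^j$, to the Vandermonde-type determinant $\prod_{i<k}(a_i b_k-a_k b_i)$, which is nonzero precisely because the $[a_i:b_i]$ are distinct; the same applies to every $k\times k$ minor, so any $k\leq d+1$ of the $Q_i$ are linearly independent. This is exactly the statement that the points $Q_i$ are in general linear position, which dualizes to the assertion that the hyperplanes $\pi_i$ are in general linear position. In particular the $d+1$ linearly independent linear forms defining $\pi_1,\ldots,\pi_{d+1}$ have only the trivial common zero, so $\bigcap_{i=1}^{d+1}\pi_i=\varnothing$.

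I note that the conclusion can alternatively be read off directly from Piene's theorem \cite{Piene77} quoted before the lemma, which already guarantees that the dual points lie on a rational normal curve; the argument above merely exhibits that curve explicitly. Accordingly, I expect no substantive difficulty here: the only delicate points are bookkeeping ones, namely tracking the signs $(-1)^j$ so that the dual parametrization is visibly a rational normal curve, and spelling out the standard equivalence between general linear position of the hyperplanes and linear independence of their associated dual points.
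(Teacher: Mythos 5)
Your proof is correct and follows essentially the same route as the paper, which dualizes the osculating hyperplanes to points of $\mathbb{P}^{d*}$ lying on a rational normal curve and then uses that points on such a curve are in general linear position (the paper simply cites Piene for this fact, as you note at the end). Your explicit Vandermonde computation is a self-contained substitute for that citation; the only minor imprecision is the aside that ``the same applies to every $k\times k$ minor,'' which is unnecessary and not literally true as stated, since nonvanishing of the full $(d+1)\times(d+1)$ determinant already makes every subset of $\{Q_1,\ldots,Q_{d+1}\}$ linearly independent.
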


So, the intersection of $d$ distinct osculating hyperplanes to a rational normal curve in $\mathbb{P}^d$ is a point. In the next lemma we give explicit coordinates for this intersection.

\begin{lemma}
\label{lemma:inter-osc-planes=pt}
Let $\iota\colon\mathbb{P}^1\hookrightarrow\mathbb{P}^d$ be the standard rational normal curve $\mathcal{C}$ as in \eqref{eq:standard-rational-normal-curve-for-us}.		
Let $P_i=\iota([a_i:b_i])$, $i\in\{1,\ldots,d\}$ be distinct points. Let $\pi_i$ be the osculating hyperplane to $\mathcal{C}$ at $P_i$. Then, the hyperplanes $\pi_1,\ldots,\pi_d$ intersect in a point $R_{1\ldots d}=[r_0:\ldots:r_d]$, where	
		\[
		r_k=\sum_{I\in\binom{[d]}{d-k}}a_Ib_{[d]\setminus I} \ \ \ \ \ \text{ for any } k\in\{0,\ldots,d\},
		\]
	with the notational convention that for $I=\{i_1,\ldots,i_h\}\subseteq[d]$, $a_I:=a_{i_1}\ldots a_{i_h}$.
\end{lemma}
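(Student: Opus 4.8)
The plan is to verify directly that the point $R_{1\ldots d}=[r_0:\ldots:r_d]$ lies on each of the osculating hyperplanes $\pi_1,\ldots,\pi_d$, and then to invoke Lemma~\ref{lemma:inter-osc-planes=empty} to upgrade this incidence to the statement that $R_{1\ldots d}$ \emph{is} their intersection point. Recall from \S\,\ref{sec:osculatingplanes} that the osculating hyperplane $\pi_i$ to the standard rational normal curve at $P_i=\iota([a_i:b_i])$ is cut out by
\[
\sum_{k=0}^d(-1)^k a_i^k b_i^{d-k}X_k=0.
\]
Thus the whole lemma reduces to two assertions: first, that the vector $(r_0,\ldots,r_d)$ is not identically zero, so that it defines a genuine point of $\mathbb{P}^d$; and second, that it satisfies the $d$ linear relations $\sum_{k}(-1)^k a_i^k b_i^{d-k}r_k=0$ for $i=1,\ldots,d$.

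The key device, and the step I expect to do the real work, is to encode the numbers $r_k$ simultaneously as the coefficients of a single bihomogeneous polynomial. I would introduce
\[
P(u,v):=\prod_{j=1}^d (a_j u+b_j v),
\]
and expand the product, grouping monomials according to which factors contribute their $a_j u$ term. Since a choice of a size-$(d-k)$ subset $I\subseteq[d]$ contributing $a_ju$ yields the monomial $a_I b_{[d]\setminus I}\,u^{d-k}v^{k}$, one reads off precisely
\[
P(u,v)=\sum_{k=0}^d r_k\, u^{d-k}v^{k},
\]
so that the $r_k$ of the statement are exactly the coefficients of $P$.

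With this reformulation both assertions become transparent. For the nonvanishing, $P$ is a product of $d$ nonzero linear forms, hence a nonzero polynomial, so its coefficients $r_0,\ldots,r_d$ cannot all vanish. For the incidence relations, I would substitute $(u,v)=(b_i,-a_i)$, which turns the left-hand side of the $i$-th equation into $P(b_i,-a_i)$; evaluating the product instead gives
\[
P(b_i,-a_i)=\prod_{j=1}^d\bigl(a_j b_i - a_i b_j\bigr),
\]
and the factor indexed by $j=i$ vanishes. Hence $R_{1\ldots d}$ lies on $\pi_1,\ldots,\pi_d$. By Lemma~\ref{lemma:inter-osc-planes=empty} these $d$ hyperplanes are in general linear position and so meet in a single point; since $R_{1\ldots d}$ is a well-defined point contained in all of them, it must be that intersection.

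I do not anticipate a serious obstacle beyond spotting the correct encoding and substitution, but one point deserves care. The naive instinct is to divide by $b_i$ and evaluate a dehomogenized generating function at $-a_i/b_i$; this breaks down for the point with $b_i=0$ (and symmetrically would break at $a_i=0$), both of which occur among distinct points of $\mathcal{C}$. Working with the homogeneous substitution $(u,v)=(b_i,-a_i)$ in $P(u,v)$ avoids any division and treats all points uniformly, which is why I would organize the computation this way.
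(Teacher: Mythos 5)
Your proof is correct and follows essentially the same route as the paper: a direct substitution of the candidate coordinates into the equations of the osculating hyperplanes, recognizing the resulting sum as the product $\prod_{j=1}^d(a_jb_i-a_ib_j)$, which vanishes because of the factor $j=i$. The only difference is cosmetic: the paper first reduces to $b_i=1$ via an automorphism of $\mathbb{P}^1$ and verifies the dehomogenized identity $\sum_{k=0}^d(-1)^ka_i^k\sum_{I}a_I=\prod_{j=1}^d(a_j-a_i)$, whereas your homogeneous generating polynomial $\prod_{j=1}^d(a_ju+b_jv)$ treats the case $b_i=0$ uniformly and also makes explicit the (minor) point that $(r_0,\ldots,r_d)\neq 0$.
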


\begin{proof}
We can assume, up to composing with an automorphism of $\mathbb{P}^1$, that $b_1,\ldots,b_d$ are non-zero. Hence, up to scaling the coordinates of the points in $\mathbb{P}^1$, we can assume that these are $[a_i:1]$. From the discussion above, the osculating plane $\pi_i$ to $P_i$ in Cartesian form is given by:
		\[
		X_0-a_iX_1+\ldots+(-1)^da_i^dX_d=0.
		\]
		So, we need to check that the point $R_{1\ldots d}$ in the statement satisfies the above equations for $i\in\{1,\ldots,d\}$. By substitution, we have that
		\[
		r_0-a_ir_1+\ldots+(-1)^da_i^dr_d=\sum_{k=0}^d(-1)^ka_i^k\sum_{I\in\binom{[d]}{d-k}}a_I=\prod_{j=1}^d(-a_i+a_j)=0.\qedhere
		\]
	\end{proof}

%-------------------------------------------------------------------------------------------
	
\section{Generalized von Staudt's theorem: statement and combinatorial reduction}

A natural way to generalize von Staudt's theorem for two sets of four points on a twisted cubic to rational normal curves is the following. 

\begin{theorem}
\label{thm:gwt}
Let $\mathcal{C}\subseteq\mathbb{P}^d$ be a rational normal curve, consider distinct points $P_i,P_i'\in \mathcal{C}$ for $1\leq i\leq d+1$ and let $\pi_i$, respectively $\pi_i'$, be the osculating hyperplanes to $\mathcal{C}$ in $P_i$, respectively $P_i'$. Define
\[
R_i=\bigcap_{j\neq i} \pi_j \mbox{ and }R_i'=\bigcap_{j\neq i} \pi_j'.
\]
Then, the points $R_1,\ldots,R_{d+1},R_1',\ldots,R_{d+1}'$ lie on a rational normal curve.
\end{theorem}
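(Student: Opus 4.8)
The plan is to reduce Theorem~\ref{thm:gwt} to verifying that the $2d+2$ points $R_1,\ldots,R_{d+1},R_1',\ldots,R_{d+1}'$ satisfy the bracket equations defining $W_{d,2d+2}$, and then invoke the characterization of Proposition~\ref{prop:pts-glp-rnc-Wdn} together with the linear generality of these points guaranteed by Lemma~\ref{prop:tetrahedrons-glp}. Concretely, I would first fix the standard rational normal curve from \eqref{eq:standard-rational-normal-curve-for-us} and choose parameters $[a_i:b_i]$ for $P_i$ and $[a_i':b_i']$ for $P_i'$. Using Lemma~\ref{lemma:inter-osc-planes=pt}, I would write down explicit homogeneous coordinates for each $R_i=\bigcap_{j\neq i}\pi_j$ (and symmetrically for $R_i'$) as elementary-symmetric-type polynomials in the $a_j,b_j$ with $j\neq i$. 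The point is that each $R_i$ has coordinates $r_k$ expressible through the bracket notation, so every bracket $|R_{s_0}\ldots R_{s_d}|$ appearing in a $\psi_{I,J}$ becomes a determinant whose entries are these symmetric functions.

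Next I would compute such a bracket explicitly. The key observation I expect to exploit is that the coordinate vector of $R_i$ is, up to sign, obtained by evaluating the osculating-hyperplane duality at the $d$ points $\{P_j\}_{j\neq i}$; dually, $R_i$ is the image under a Vandermonde-type map of the product $\prod_{j\neq i}(b_j x_0 - a_j x_1)$ expanded in the basis $\binom{d}{k}$. Consequently a bracket $|R_{i_1}\ldots R_{i_{d+1}}|$ should factor, via a Cauchy--Binet or Vandermonde determinant identity, into a product of pairwise differences $\bigl(a_{i}b_{j}-a_{j}b_{i}\bigr)$ over suitable index pairs, times a fixed combinatorial constant. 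Establishing this factorization cleanly is the technical heart: once each bracket is a monomial in the $2\times2$ minors $[ij]:=a_ib_j-a_jb_i$ of the parameter points, the equation $\psi_{I,J}=0$ reduces to a purely combinatorial identity among products of such minors, where the two competing bracket monomials must be shown to agree.

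The verification of that combinatorial identity is where I expect the main obstacle. Each $\psi_{I,J}$ is a difference of two products of four brackets (three-term brackets padded by the common indices $j_1,\ldots,j_{d-2}$), and after the factorization each side becomes a large product of $[ij]$-factors indexed by subsets. I would organize the bookkeeping by tracking, for each unordered pair $\{i,j\}$ drawn from the $2d+2$ parameters, the total multiplicity with which $[ij]$ occurs on each side of $\psi_{I,J}$, and show these multiplicities coincide. Because the indices in $I$ fall into the two groups (primed versus unprimed) in varying patterns, I anticipate needing a case analysis according to how $\{i_1,\ldots,i_6\}$ distributes across the two tetrahedra; the Plücker-type three-term relation underlying the classical $\psi_{I,J}$ should make the two monomials match in each case.

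Finally, with all $\psi_{I,J}(R_1,\ldots,R_{d+1}')=0$ established, linear generality from Lemma~\ref{prop:tetrahedrons-glp} places the configuration in the regime where Proposition~\ref{prop:pts-glp-rnc-Wdn} applies, so membership in $W_{d,2d+2}$ forces the points onto a genuine rational normal curve, completing the proof. I would also record that the cases $d=2$ and $d=3$ recover Brianchon's and von Staudt's theorems, the latter in its projective-dual form as noted after the statement. The delicate point throughout is that the brackets must be computed with the correct signs and the binomial normalization of \eqref{eq:standard-rational-normal-curve-for-us}, since a sign error would break the cancellation in $\psi_{I,J}$; keeping the duality of \S\,\ref{sec:osculatingplanes} explicit should control this.
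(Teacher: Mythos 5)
Your plan follows essentially the same route as the paper: reduce to checking the equations $\psi_{I,J}$ of $W_{d,2d+2}$ (using the linear generality of the $R_i,R_i'$ and the fact that linearly general points of $W_{d,n}$ lie on a rational normal curve), factor each bracket $|R_{k_1}\ldots R_{k_{d+1}}|$ into a product of $2\times2$ minors $|Q_iQ_j|$ up to sign, and then match multiplicities and signs of these factors between the two bracket monomials by a case analysis on how $I$ distributes across the two simplices. This is precisely the paper's Lemma~\ref{lem:bracket-factorization} and Proposition~\ref{prop:Wdn-equations-are-satisfied}, so the proposal is a correct outline of the same argument, with the remaining work being the sign bookkeeping you already flag as the delicate point.
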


\begin{remark}
In the above statement, the osculating hyperplanes $\pi_1,\ldots,\pi_{d+1}$ are in general linear position by Lemma~\ref{lemma:inter-osc-planes=empty}. So, any $d$ among them intersect precisely at one point, defining in this way the points $R_1,\ldots,R_{d+1}$. The same considerations hold for $\pi_1',\ldots,\pi_{d+1}'$.
Therefore, we obtain two simplices: one with vertices $R_1,\ldots,R_{d+1}$ and facets $\pi_1,\ldots,\pi_{d+1}$, the other with vertices $R_1',\ldots,R_{d+1}'$ and facets $\pi_1',\ldots,\pi_{d+1}'$. These are the higher dimensional analogue of the two tetrahedra in von Staudt's theorem.   
\end{remark}

For the rest of the paper, our goal will be to prove Theorem~\ref{thm:gwt}. The current section contains the proof of a technical proposition which is needed for this purpose.

\begin{proposition}
\label{prop:gwt+W}
We retain the set-up of Theorem~\ref{thm:gwt}. If the $2d+2$ points $R_i,R_i'$, $i=1,\ldots,d+1$ satisfy the equations of $W_{d,2d+2}$, then the points lie on a rational normal curve.
\end{proposition}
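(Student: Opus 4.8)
```latex
The plan is to reduce the statement of Proposition~\ref{prop:gwt+W} to a direct application of the results recalled in \S\,\ref{sec:Vdn-and-equations}, together with the general position guaranteed by Lemma~\ref{prop:tetrahedrons-glp}. Concretely, the scheme $W_{d,2d+2}$ is only an \emph{approximation} of $V_{d,2d+2}$: by Theorem~\ref{thm:equationsVdn}, set-theoretically $W_{d,2d+2}$ may strictly contain $V_{d,2d+2}$, and the extra locus is controlled by the degenerate configurations in $Y_{d,2d+2}$ (the configurations lying on a hyperplane) plus, possibly, configurations on quasi-Veronese curves. So satisfying the equations $\psi_{I,J}$ places the configuration in $W_{d,2d+2}$, but to conclude it lies on a genuine rational normal curve I must rule out that it lands in the ``bad'' part of $W_{d,2d+2}\setminus V_{d,2d+2}$.

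First I would invoke Lemma~\ref{prop:tetrahedrons-glp} (referenced in the introduction) to record that the $2d+2$ points $R_1,\dots,R_{d+1},R_1',\dots,R_{d+1}'$ are in linearly general position; in particular they do not all lie on a hyperplane, so the configuration is \emph{not} a point of $Y_{d,2d+2}$. Second, I would appeal to Proposition~\ref{prop:pts-glp-rnc-Wdn} (also cited in the introduction), which asserts precisely that for linearly general configurations the equations of $W_{d,n}$ characterize lying on a rational normal curve. The combination is immediate: linear generality removes the degenerate stratum, and the defining property of $W_{d,n}$ on the complement of that stratum forces membership in $V_{d,2d+2}$, i.e.\ the points lie on a rational normal curve.

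I would present the argument in this order: (1) state that the $R_i,R_i'$ are linearly general, citing Lemma~\ref{prop:tetrahedrons-glp}; (2) observe that by hypothesis the configuration satisfies all $\psi_{I,J}$, hence defines a point of $W_{d,2d+2}$; (3) apply Proposition~\ref{prop:pts-glp-rnc-Wdn} to a linearly general point of $W_{d,2d+2}$ to deduce it lies on a rational normal curve. The whole proof is therefore a short assembly of two previously established facts, and the real mathematical content has been displaced into those two statements.

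The main obstacle is not in this proposition but in its two inputs, which this proof takes for granted. The genuinely hard step — to be handled elsewhere — is verifying the hypothesis of this proposition, namely that the specific points $R_i,R_i'$ built from osculating hyperplanes actually \emph{do} satisfy every bracket equation $\psi_{I,J}$ of $W_{d,2d+2}$; this is the subtle algebro-combinatorial computation promised for \S\,\ref{sec:proof-gen-vonStaudt}, using the explicit coordinates from Lemma~\ref{lemma:inter-osc-planes=pt}. Within the present proposition, the only delicate point is ensuring that ``linearly general'' in Lemma~\ref{prop:tetrahedrons-glp} is exactly the hypothesis under which Proposition~\ref{prop:pts-glp-rnc-Wdn} upgrades membership in $W_{d,2d+2}$ to membership in $V_{d,2d+2}$ — that is, checking that the notion of general position excluding $Y_{d,2d+2}$ matches the one required to dispose of any residual non-$V$ components of $W_{d,2d+2}$.
```
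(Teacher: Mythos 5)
Your proposal is correct and matches the paper's proof exactly: the paper likewise reduces the proposition to the combination of Lemma~\ref{prop:tetrahedrons-glp} (the points $R_i,R_i'$ are in general linear position) and Lemma~\ref{prop:pts-glp-rnc-Wdn} (linearly general configurations in $W_{d,n}$ lie on a rational normal curve). Your additional remarks about where the real difficulty lies are accurate but not part of the proof itself.
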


\begin{proof}
We divide the proof into two steps:
\begin{itemize}

\item[(1)] The points $R_i,R_i'$, $i=1,\ldots,d+1$ are in general linear position. This is the content of Lemma~\ref{prop:tetrahedrons-glp}.

\item[(2)] Linearly general points that satisfy the equations of $W_{d,n}$, $n\geq d+4$, lie on a rational normal curve. This is discussed in Lemma~\ref{prop:pts-glp-rnc-Wdn}.

\end{itemize}
The conclusion then follows.
\end{proof}

\begin{lemma}
\label{prop:tetrahedrons-glp}
We retain the set-up of Theorem~\ref{thm:gwt}. The points $R_1,\ldots,R_{d+1},R_1',\ldots,R_{d+1}'$ are in general linear position.
\end{lemma}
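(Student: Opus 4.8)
The plan is to reduce the statement to a clean linear-algebra fact about binary forms and then argue by counting roots. Since any two rational normal curves in $\mathbb{P}^d$ are projectively equivalent and projective transformations preserve osculating hyperplanes, their intersections, and general linear position, I would first assume $\mathcal{C}$ is the standard rational normal curve of \eqref{eq:standard-rational-normal-curve-for-us}, with $P_i=\iota([a_i:b_i])$ and $P_i'=\iota([a_i':b_i'])$. By Lemma~\ref{lemma:inter-osc-planes=empty} the $d$ hyperplanes $\pi_j$, $j\neq i$, meet in a single point $R_i$, and applying Lemma~\ref{lemma:inter-osc-planes=pt} to the $d$ points $P_j$, $j\neq i$, gives its coordinates $r_k=\sum_{I\in\binom{[d+1]\setminus\{i\}}{d-k}}a_Ib_{([d+1]\setminus\{i\})\setminus I}$. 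I would then observe that these are exactly the coefficients of the binary form $\Phi_i:=\prod_{j\in[d+1]\setminus\{i\}}(a_jx_0+b_jx_1)\in S_d$ written in the monomial basis $x_0^{d-k}x_1^k$; likewise $R_i'$ corresponds to $\Phi_i':=\prod_{j\neq i}(a_j'x_0+b_j'x_1)$. Under the identification $\mathbb{P}^d=\mathbb{P}(S_d)$, linear independence of a set of the points $R_i,R_i'$ is equivalent to linear independence of the corresponding forms, so it suffices to show that any $d+1$ of the $2d+2$ forms $\Phi_i,\Phi_i'$ are linearly independent.

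For the core step, fix $A,B\subseteq[d+1]$ with $|A|+|B|=d+1$ and suppose $\sum_{i\in A}\lambda_i\Phi_i+\sum_{j\in B}\mu_j\Phi_j'=0$ in $S_d$. Set $G=\sum_{i\in A}\lambda_i\Phi_i$ and $H=\sum_{j\in B}\mu_j\Phi_j'$, so that $G=-H$. For each index $m\in[d+1]\setminus A$ every form $\Phi_i$ with $i\in A$ contains the factor $a_mx_0+b_mx_1$, hence $G$ vanishes at the point $q_m:=[b_m:-a_m]\in\mathbb{P}^1$; this produces $|B|$ roots of $G$. Symmetrically $H$, and therefore $G$, vanishes at the $|A|$ points $q_m':=[b_m':-a_m']$ for $m\in[d+1]\setminus B$. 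Since the $2d+2$ points $P_i,P_i'$ are distinct, the points $q_m,q_m'$ just listed are $|A|+|B|=d+1$ distinct points of $\mathbb{P}^1$, so the degree $d$ form $G$ has $d+1$ distinct roots and must vanish identically; consequently $H\equiv0$ as well. Finally $\Phi_i(q_i)=\prod_{j\neq i}(a_jb_i-a_ib_j)\neq0$ while $\Phi_{i'}(q_i)=0$ for $i'\neq i$, so the family $\{\Phi_i\}_{i\in[d+1]}$ is of Lagrange type; evaluating $G=0$ at $q_i$ for $i\in A$ forces $\lambda_i=0$, and the same argument applied to $H$ gives $\mu_j=0$. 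Thus the only dependence is trivial, and the $d+1$ chosen forms are independent.

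The substantive point, and the only place where the hypotheses are genuinely used, is the root count: I expect the main care to go into verifying that the $d+1$ evaluation points collected from $G=-H$ are honestly distinct, which is exactly where the distinctness of all $2d+2$ points $P_i,P_i'$ enters, together with the bookkeeping that each $\Phi_i$ with $i\in A$ (resp.\ $\Phi_j'$ with $j\in B$) vanishes at precisely the complementary indices. Once the dictionary between the vertices $R_i,R_i'$ and the product forms $\Phi_i,\Phi_i'$ is established via Lemma~\ref{lemma:inter-osc-planes=pt}, the remainder reduces to the elementary fact that a nonzero degree $d$ form has at most $d$ roots, combined with the Lagrange-type independence of $\{\Phi_i\}$ and $\{\Phi_j'\}$.
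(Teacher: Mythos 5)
Your proof is correct and rests on the same key identification as the paper's: the vertex $R_i$ corresponds to the binary form $\prod_{j\neq i}(a_jx_0+b_jx_1)$ (the paper obtains this intrinsically via apolarity, you via Lemma~\ref{lemma:inter-osc-planes=pt}), and a nontrivial dependence would produce a nonzero degree-$d$ binary form with $d+1$ distinct zeros, equivalently $d+1$ distinct linear factors, which is impossible. The paper packages this as a divisibility and factor-counting contradiction with a case analysis on how the chosen subset meets the two simplices, while you run a single uniform root count on $(d+1)$-subsets followed by a Lagrange-type evaluation; the mathematical content is essentially identical.
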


\begin{proof}
Using the intrinsic description of the rational normal curve $\mathcal{C}$ in \S\,\ref{sec:osculatingplanes} we have that $P_i=[L_i^d]$ and $P_i'=[N_i^d]$ for some distinct degree one forms $L_i,N_i\in S_1$. Moreover, from the description of the osculating hyperplane to a rational normal curve, we have that
\[
R_i=\left[\prod_{j\neq i} L_j\right] \mbox{ and } R_i'=\left[\prod_{j\neq i} N_j\right]
\]
for $1\leq i\leq d+1$. More in detail, recall that the osculating hyperplane $\pi_j$ at $P_j$ consists of points of the form $[L_jG]$ for $G\in S_{d-1}$. So, the point $R_i=\cap_{j\neq i}\pi_j$ corresponds to the claimed form. An analogous argument applies to $R_i'$.

We let $\mathcal{R}=\{R_1,\ldots,R_{d+1}\}$ and $\mathcal{R}'=\{R_1',\ldots,R_{d+1}'\}$. To prove that $\mathcal{R}\cup\mathcal{R}'$ is a set of points in general linear position, we need to show that, for each subset $\mathcal{X}\subseteq\mathcal{R}\cup\mathcal{R}'$, its linear span has maximal dimension, that is,
\[
\dim\langle\mathcal{X}\rangle=\min\lbrace d+1,|\mathcal{X}|\rbrace-1.
\]
To do so, define $\mathcal{X}_1=\mathcal{R}\cap\mathcal{X}$ and $\mathcal{X}_2=\mathcal{R}'\cap\mathcal{X}$. First, we deal with the case $\mathcal{X}_2=\emptyset$. So, $\mathcal{X}=\mathcal{X}_1\subseteq\mathcal{R}$. Up to relabeling, we may assume that
\[
\mathcal{X}=\lbrace R_1\,\ldots,R_\ell\rbrace
\]
and we want to show that $\dim\langle\mathcal{X}\rangle=\ell-1$. Assume by contradiction that $\dim\langle\mathcal{X}\rangle<\ell-1$. Without loss of generality, say that
\begin{equation}
\label{eq:P0inspanP1-Pr}
R_1=\left[\prod_{j\neq 1} L_j\right]\in\langle R_2,\ldots,R_\ell\rangle.
\end{equation}
We observe that for all $i=2,\ldots,\ell$, the polynomial
\[
\prod_{j\not\in\{2,\ldots,\ell\}} L_j~\textrm{divides}~\prod_{j\neq i}L_j.
\]
So, if $\prod_{j\neq 1} L_j$ is a linear combination of $\prod_{j\neq i}L_j$ for $i=2,\ldots,\ell$ by \eqref{eq:P0inspanP1-Pr}, then $L_1$ divides $\prod_{j\neq1} L_j$, which is a contradiction. This completes the proof of the case $\mathcal{X}_2=\emptyset$. The case $\mathcal{X}_1=\emptyset$ is analogous.
	
Now, we deal with the case $\mathcal{X}_1,\mathcal{X}_2\neq\emptyset$. We set $|\mathcal{X}_1|=\ell$ and $|\mathcal{X}_2|=s$. Up to relabeling, we may assume that
\[
\mathcal{X}_1=\{ R_1,\ldots,R_\ell\},~\mathcal{X}_2=\{ R_1',\ldots,R_s'\}.
\]
By the discussion above, we know that $\dim\langle\mathcal{X}_1\rangle=\ell-1\mbox{ and }\dim\langle\mathcal{X}_2\rangle=s-1$. First, note that the conclusion for $\ell+s-1\leq d$ implies the result when $\ell+s-1>d$. So, we focus on the former and we show that $\dim\langle\mathcal{X}\rangle=\ell+s-1$. For this purpose, it is enough to show that
	\[
	\langle\mathcal{X}_1\rangle\cap\langle\mathcal{X}_2\rangle=\emptyset.
	\]
	If $[F]\in \langle\mathcal{X}_1\rangle\cap\langle\mathcal{X}_2\rangle$, then the forms
	\[
	\prod_{j\not\in\lbrace1,\ldots,\ell\rbrace}L_j~\textrm{and}~\prod_{j\not\in\lbrace1,\ldots,s\rbrace}N_j
	\]
	divide $F$. Thus, since $L_i\neq N_j$ for all $i$ and $j$, we obtain that $F$ has at least
	\[
	(d+1-\ell)+(d+1-s)=d+1+d-(\ell+s-1)\geq d+1
	\]
	factors, which is a contradiction. Hence, $\langle\mathcal{X}_1\rangle\cap\langle\mathcal{X}_2\rangle=\emptyset$ and the result is now proved.
\end{proof}

\begin{lemma}
\label{prop:pts-glp-rnc-Wdn}
Let $n\geq d+4$ and consider $\mathbf{p}=(p_1,\ldots,p_n)\in W_{d,n}$ such that $p_1,\ldots,p_n$ are in general linear position. Then $p_1,\ldots,p_n$ lie on a rational normal curve.
\end{lemma}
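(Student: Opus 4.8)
The plan is to reduce the statement for $n$ points to the base case $n=d+4$, where the structure theorem for $W_{d,d+4}$ is available, and then to glue the resulting curves using Castelnuovo's lemma. First I would exploit the \emph{locality} of the defining equations: each $\psi_{I,J}$ only involves the points $p_j$ with $j\in J$, where $|J|=d+4$. Consequently $\mathbf{p}\in W_{d,n}$ if and only if every subconfiguration $\mathbf{p}_J=(p_j)_{j\in J}$, with $J\in\binom{[n]}{d+4}$, lies in $W_{d,d+4}$. A subconfiguration of points in general linear position is again in general linear position, so each $\mathbf{p}_J$ consists of $d+4$ points in general linear position; in particular they do not lie on a hyperplane, hence $\mathbf{p}_J\notin Y_{d,d+4}$. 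Applying Theorem~\ref{thm:equationsVdn}(1), namely $W_{d,d+4}=V_{d,d+4}\cup Y_{d,d+4}$, I conclude $\mathbf{p}_J\in V_{d,d+4}$ for every such $J$.

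The crux of the argument is then to upgrade membership in $V_{d,d+4}$ to lying on an \emph{honest} rational normal curve, using general linear position. By the description of the boundary of $V_{d,d+4}$ recalled in \S\,\ref{sec:Vdn-and-equations}, a configuration in $V_{d,d+4}$ is either degenerate or lies on a quasi-Veronese curve; the degenerate configurations are excluded by general position. So I would rule out the genuinely reducible (or non-reduced) quasi-Veronese curves by a dimension count. Suppose $\mathbf{p}_J$ lies on a quasi-Veronese curve with components $\mathcal{C}_1,\ldots,\mathcal{C}_r$, $r\geq2$, where each $\mathcal{C}_i$ is a rational normal curve in its span $\mathbb{P}^{d_i}=\langle\mathcal{C}_i\rangle$ by Artin's characterization, and $\sum_i d_i=d$ since the curve has degree $d$. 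Choosing a leaf $\mathcal{C}_r$ of the underlying tree, the remaining curve $\mathcal{C}_1\cup\cdots\cup\mathcal{C}_{r-1}$ is connected and spans the \emph{proper} subspace $\mathbb{P}^{\,d-d_r}$. Since in general linear position a proper subspace $\mathbb{P}^D$ can contain at most $D+1$ of the points, at most $(d-d_r+1)+(d_r+1)=d+2$ of the $p_j$ can lie on the whole curve, contradicting $|J|=d+4$; the non-reduced case is handled the same way after passing to the reduced support. Hence the quasi-Veronese curve is irreducible, and being an irreducible degree $d$ rational normal curve in its span $\mathbb{P}^d$, it is a smooth rational normal curve through $\mathbf{p}_J$.

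Finally I would glue via Castelnuovo's lemma. The points $p_1,\ldots,p_{d+3}$ are in general linear position, so they lie on a unique rational normal curve $\mathcal{C}$. For each $k>d+3$, the previous two steps applied to $J=\{1,\ldots,d+3,k\}$ produce a smooth rational normal curve $\mathcal{C}_k$ passing through $p_1,\ldots,p_{d+3},p_k$. Since $\mathcal{C}_k$ contains the $d+3$ points $p_1,\ldots,p_{d+3}$ in general linear position, the uniqueness part of Castelnuovo's lemma forces $\mathcal{C}_k=\mathcal{C}$, and therefore $p_k\in\mathcal{C}$. As $k$ was arbitrary, all of $p_1,\ldots,p_n$ lie on $\mathcal{C}$, which proves the lemma.

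I expect the main obstacle to be the second paragraph: the passage from ``$\mathbf{p}_J\in V_{d,d+4}$'' to ``$\mathbf{p}_J$ lies on a smooth rational normal curve''. This is exactly the point where general linear position must be used in an essential way, and the span-counting argument excluding the reducible quasi-Veronese boundary is the delicate part, since the crude per-component bound is too weak and one must instead exploit that a connected proper subcurve spans a proper linear subspace.
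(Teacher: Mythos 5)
Your proof is correct, and in its first half it takes a genuinely different route from the paper. The paper establishes $\mathbf{p}\in V_{d,n}$ by induction on $n$: the inductive step introduces the Veronese envelope and invokes \cite[Lemma~4.21]{CGMS18} to pass from $V_{d,n-1}$ to $V_{d,n}$, and only then rules out reducible quasi-Veronese curves by the same two-piece span count you use. You instead exploit the locality of the equations $\psi_{I,J}$ (each involves only the $d+4$ points indexed by $J$) to reduce at once to the structure theorem $W_{d,d+4}=V_{d,d+4}\cup Y_{d,d+4}$ on every $(d+4)$-element subset, upgrade each such subconfiguration to a smooth rational normal curve, and then glue via the uniqueness in Castelnuovo's lemma applied to the fixed frame $p_1,\ldots,p_{d+3}$. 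This buys you a proof with no induction and no appeal to the Veronese-envelope machinery of \cite[Lemma~4.21]{CGMS18}, at the price of having to exclude reducible quasi-Veronese curves through only $d+4$ points rather than $n$; your leaf-plus-complement count $(d-d_r+1)+(d_r+1)=d+2<d+4$ handles this, and is essentially the paper's own step (2), which likewise splits $C$ into a connected piece and a component rather than summing crudely over all components. The only phrasing to tighten is the appeal to ``the underlying tree'': what you actually need is that some irreducible component has connected complement (a leaf of a spanning tree of the dual graph suffices), and that this complement, being a connected closed subcurve, is quasi-Veronese of degree $d-d_r$ spanning a $\mathbb{P}^{d-d_r}$ by Artin's condition (3); the non-reduced case does not arise since quasi-Veronese curves are reduced. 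With that said, the argument is complete.
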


\begin{proof}
We prove that: (1) $\mathbf{p}\in V_{d,n}$, so $p_1,\ldots,p_n$ lie on a quasi-Veronese curve $C$; (2) $C$ is a smooth rational curve.

To prove (1), we show that $\mathbf{p}\in V_{d,n}$ by induction on $n$. In the base case $n=d+4$, we have, by Theorem~\ref{thm:equationsVdn}, that $W_{d,d+4}=V_{d,d+4}\cup Y_{d,d+4}$. As $\mathbf{p}\in W_{d,d+4}$ and the points are linearly general, then $\mathbf{p}\in V_{d,d+4}$. Now, we assume the conclusion true for $n-1\geq d+4$, and we prove it for $n$. We need to introduce some notation. For $J\subseteq[n]$, let $\pi_J\colon(\mathbb{P}^d)^n\rightarrow(\mathbb{P}^d)^{|J|}$ be the projection onto the factors labeled by $J$. For a configuration $\mathbf{q}=(q_1,\ldots,q_m)$ of points in $\mathbb{P}^d$, we define the \emph{Veronese envelope} $E_{\mathbf{q}}$ as the union of the quasi-Veronese curves passing through $q_1,\ldots,q_m$.

Let us consider $\pi_{[n-1]}(\mathbf{p})$. This lies in $W_{d,n-1}$ and consists of $n-1$ linearly general points. So, by the inductive step, $\pi_{[n-1]}(\mathbf{p})\in V_{d,n-1}$, in particular they lie on a quasi-Veronese curve $R$. Consider $\pi_{J}(\mathbf{p})$ with $J\in\binom{[n-1]}{d+3}$. In this case, by Castelnuovo's lemma, $E_{\pi_J(\mathbf{p})}$ is a smooth rational normal curve $C_J$. We have the containment
\[
E_{\pi_{[n-1]}(\mathbf{p})}\subseteq\bigcap_{J\in\binom{[n-1]}{d+3}}E_{\pi_J(\mathbf{p})}.
\]
As $E_{\pi_{[n-1]}(\mathbf{p})}\subseteq E_{\pi_J(\mathbf{p})}$, we have that $E_{\pi_{[n-1]}(\mathbf{p})}$ is the smooth rational curve $C_J$. As $J$ was arbitrary, all $E_{\pi_J(\mathbf{p})}$ in the intersection equal $E_{\pi_{[n-1]}(\mathbf{p})}$, so equality holds. Hence, by \cite[Lemma~4.21]{CGMS18}, $\mathbf{p}\in V_{d,n}$ and there is a quasi-Veronese curve $C$ containing $\mathbf{p}$.

Let us prove (2). Suppose $C$ is not a rational normal curve. So, $C=C_1\cup C_2$ with $\deg(C_i)=d_i\geq1$, $d_1+d_2=d$, and $C_i$ spans a linear space of dimension $d_i$. As the points $\mathbf{p}$ are in general linear position, we have at most $d_i+1$ points in $\mathbf{p}$ that lie on $C_i$. But $(d_1+1)+(d_2+1)=d+2<n$, which is a contradiction.
\end{proof}

%-------------------------------------------------------------------------------------------

\section{Generalized von Staudt's theorem: proof of the combinatorial part}
\label{sec:proof-gen-vonStaudt}

We retain the notation of Theorem~\ref{thm:gwt}. For convenience, we relabel $P_1',\ldots,P_{d+1}'$ as $P_{d+2},\ldots,P_{2d+2}$ and $R_1',\ldots,R_{d+1}'$ as $R_{d+2},\ldots,R_{2d+2}$. In this way, the indices $T_1:=\{1,\ldots,d+1\}$ and $T_2:=\{d+2,\ldots,2d+2\}$ describe the first and second simplex respectively.
Thanks to Proposition~\ref{prop:gwt+W}, to prove Theorem~\ref{thm:gwt} it is enough to check that the points $R_{1},\ldots,R_{2d+2}$ satisfy the equations $\psi_{I,J}$ of $W_{2,2d+2}$, which are recalled in \S\,\ref{sec:Vdn-and-equations}. 
To do so, we can assume without loss of generality that the rational normal curve where the points $P_1,\dots,P_{2d+2}$ lie is the standard one. 
 More precisely, we assume that $Q_i=[a_i:b_i]\in\mathbb{P}^1$ are distinct points, $i=1,\ldots,2d+2$, and $P_i=\iota(Q_i)$ where  $\iota\colon\mathbb{P}^1\hookrightarrow\mathbb{P}^d$ is the standard rational normal curve as in \eqref{eq:standard-rational-normal-curve-for-us}. 

The first step is to prove that the brackets $|R_{k_1}\ldots R_{k_{d+1}}|$ decompose as products of two-dimensional brackets $|Q_iQ_j|=
\left|\begin{array}{cc}
	a_i & a_j \\
	b_i & b_j
\end{array}\right|$.  We do this in the following lemma.

\begin{lemma}
\label{lem:bracket-factorization}
Let $K=\{k_1<\ldots<k_{d+1}\}\subseteq[2d+2]$. Define $K_1:=K\cap T_1$ and $K_2:=K\cap T_2$. Then, we have the following equality in the polynomial ring $\kk[a_i,b_i\mid i=1,\dots,2d+2]$:
\[
|R_{k_1}\ldots R_{k_{d+1}}|=(-1)^{\binom{|K_1|}{2}+\binom{|K_2|}{2}}\prod_{\substack{i,j\in K_1\\i< j}}|Q_iQ_j|\prod_{\substack{i,j\in K_2\\i< j}}|Q_iQ_j|\prod_{\substack{i\in T_1\setminus K_1\\j\in T_2\setminus K_2}}|Q_iQ_j|.
\]
\end{lemma}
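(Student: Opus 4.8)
The plan is to translate the bracket into the language of binary forms and then compute it by evaluating the forms at well-chosen points. Writing $L_i=a_ix_0+b_ix_1$, so that $Q_i=[a_i:b_i]$ and $P_i=[L_i^d]$, the description of the points in the proof of Lemma~\ref{prop:tetrahedrons-glp} gives $R_k=[F_k]$ where $F_k=\prod_{j\in T_1\setminus\{k\}}L_j$ for $k\in T_1$ and $F_k=\prod_{j\in T_2\setminus\{k\}}L_j$ for $k\in T_2$. Under $\iota$ the homogeneous coordinates of $[F_k]$ in $\mathbb{P}^d=\mathbb{P}(S_d)$ are exactly the coefficients of $F_k$ in the monomial basis $x_0^{d-r}x_1^r$, so $|R_{k_1}\ldots R_{k_{d+1}}|=\det M$, where $M$ is the $(d+1)\times(d+1)$ coefficient matrix of the forms $F_{k_1},\ldots,F_{k_{d+1}}$. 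The key tool is evaluation at the root $\rho_i=[b_i:-a_i]$ of $L_i$: since $F_{k_m}(\rho_i)=\sum_r M_{r,m}\,b_i^{d-r}(-a_i)^r$, for any index set $A$ of size $d+1$ one has the polynomial identity $\det\big[F_{k_m}(\rho_i)\big]_{i\in A}=\det(E_A)\cdot\det M$, where $E_A$ is the Vandermonde-type evaluation matrix. A direct computation gives $L_j(\rho_i)=a_jb_i-a_ib_j=-|Q_iQ_j|$ and $\det E_A=\prod_{i<j\in A}|Q_iQ_j|$ (rows of $A$ taken in increasing order). Since $\det E_A\neq0$ as a polynomial, this recovers $\det M$ as the quotient of two explicit products of brackets.

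First I would settle the single-simplex case, which also serves as the base of the reduction: for a form set indexed by some $K\subseteq T_1$ (or a point set of size $m+1$ in $\mathbb{P}^m$), evaluating at the roots $\rho_k$, $k\in K$, makes $\big[F_k(\rho_i)\big]_{i,k\in K}$ \emph{diagonal}, because $L_i$ divides $F_k$ whenever $i\in K\setminus\{k\}$. The diagonal entries are $F_k(\rho_k)=\prod_{j\in K\setminus\{k\}}(-|Q_kQ_j|)$, and dividing their product by $\det E_K$ yields $\det M=(-1)^{\binom{|K|}{2}}\prod_{i<j\in K}|Q_iQ_j|$, matching the formula when $K_2=\emptyset$.

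For the mixed case I would choose $A=(T_1\setminus K_1)\cup(T_2\setminus K_2)$, of size $(d+1)$. Because $F_k$ with $k\in K_1$ vanishes at $\rho_i$ for every $i\in T_1\setminus\{k\}$, and $F_k$ with $k\in K_2$ vanishes at $\rho_i$ for every $i\in T_2\setminus\{k\}$, a suitable ordering of rows and columns makes $\big[F_{k_m}(\rho_i)\big]_A$ block triangular, with a $p\times p$ block ($p=|K_1|$) supported on rows $T_2\setminus K_2$ and columns $K_1$, and a $q\times q$ block ($q=|K_2|$) on rows $T_1\setminus K_1$ and columns $K_2$. In each block I would factor out the common polynomial $G_1=\prod_{j\in T_1\setminus K_1}L_j$, respectively $G_2=\prod_{j\in T_2\setminus K_2}L_j$, whose evaluations depend only on the row; what remains is a smaller determinant of the \emph{pure-Vandermonde} forms $\prod_{j\in K_1\setminus\{k\}}L_j$ (resp. for $K_2$), which splits again as (pure coefficient determinant, handled above) times a scalar Vandermonde $\det\tilde E$. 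Dividing the product of the two block determinants by $\det E_A$ collapses the matching Vandermonde factors $\prod_{T_1\setminus K_1}$ and $\prod_{T_2\setminus K_2}$ and leaves exactly $\prod_{K_1}|Q_iQ_j|$, $\prod_{K_2}|Q_iQ_j|$, and one copy of the cross term $\prod_{i\in T_1\setminus K_1,\,j\in T_2\setminus K_2}|Q_iQ_j|$.

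The main obstacle is the sign bookkeeping, and it is worth doing slowly. Three sources of signs interact: the factor $-1$ in each $L_j(\rho_i)=-|Q_iQ_j|$ (contributing powers of $(-1)$ of size $pq$ from the $G_i(\rho_\bullet)$ products), the pure-case signs $(-1)^{\binom{p}{2}}$ and $(-1)^{\binom{q}{2}}$ from the two blocks, and the permutation sign incurred because the block ordering of $A$ differs from the increasing order used in $\det E_A=\prod_{i<j\in A}|Q_iQ_j|$. The crucial structural point that makes everything align is that every index of $T_1$ is smaller than every index of $T_2$, so the cross pairs appearing in $\det E_A$ are precisely the factors $|Q_iQ_j|$ with $i\in T_1\setminus K_1$, $j\in T_2\setminus K_2$ of the claimed cross term; tracking the reordering then forces all the $(-1)^{pq}$ contributions to cancel, leaving the cross term with coefficient $+1$ and the total sign equal to $(-1)^{\binom{|K_1|}{2}+\binom{|K_2|}{2}}$, as required.
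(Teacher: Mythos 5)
Your argument is correct, and it takes a genuinely different route from the paper. The paper's proof is in two stages: it first shows that every factor $|Q_iQ_j|$ of the right-hand side divides the bracket (a vanishing argument — coincident points, or all $R_{k}$ lying on a common osculating hyperplane — combined with irreducibility of $|Q_iQ_j|$), matches degrees to conclude equality up to a constant $c$, and then pins down $c$ by specializing $b_i=1$ and successively setting various $a_i=0$, running an induction on $d$ for the case $K_1=\emptyset$ and then an induction on $|K_1|$. You instead compute the bracket outright: identify $|R_{k_1}\ldots R_{k_{d+1}}|$ with the coefficient determinant $\det M$ of the binary forms $F_k$, multiply by the Vandermonde evaluation matrix $E_A$ at the roots $\rho_i$ for $i\in A=(T_1\setminus K_1)\cup(T_2\setminus K_2)$, and exploit the block anti-diagonal structure of $E_AM$ forced by the divisibility $L_i\mid F_k$. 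This buys you the constant for free, with no specialization or induction, at the price of the sign bookkeeping you flag as the main obstacle; I verified that it closes up exactly as you predict. Concretely, with $p=|K_1|$, $q=|K_2|$: the permutation sign $(-1)^{pq}$ from swapping the anti-diagonal blocks cancels against the $(-1)^{pq}$ arising from the $qp$ factors $L_j(\rho_i)=-|Q_iQ_j|$ in the $G_2$-product (where $i\in T_1\setminus K_1<j\in T_2\setminus K_2$, so each factor carries a minus sign relative to the increasing-order bracket), while the $G_1$-product contributes no sign since there $j<i$ and $L_j(\rho_i)=|Q_jQ_i|$; finally, because every index of $T_1$ precedes every index of $T_2$, $\det E_A$ factors as $\prod_{i<j\in T_1\setminus K_1}|Q_iQ_j|\cdot\prod_{i<j\in T_2\setminus K_2}|Q_iQ_j|$ times one copy of the cross term, which cancels the two sub-Vandermondes and one of the two cross-term copies, leaving the stated right-hand side with sign $(-1)^{\binom{p}{2}+\binom{q}{2}}$. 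One small point to make explicit in a final write-up: the identity in the lemma depends on the choice of homogeneous coordinates for the $R_k$, so you should state that you use the representatives of Lemma~\ref{lemma:inter-osc-planes=pt}, which are precisely the coefficient vectors of the $F_k$ in the monomial basis $x_0^{d-r}x_1^r$ — this is what makes $|R_{k_1}\ldots R_{k_{d+1}}|=\det M$ an equality of polynomials rather than an equality up to scalars.
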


\begin{proof}
We subdivide the proof into two parts.

\textbf{Part 1.} We first show that the two sides of the equality in the statement differ by a multiplicative constant. We do this by showing that all the degree two factors $|Q_iQ_j|$ of the right-hand side divide the bracket $|R_{k_1}\ldots R_{k_{d+1}}|$. Suppose that $i,j\in K_1$ with $i\neq j$. If $Q_i=Q_j$, then the points $R_{k_i}$ and $R_{k_j}$ coincide, which makes the determinant $|R_{k_1}\ldots R_{k_{d+1}}|$ equal zero. Then, $|Q_iQ_j|$ divides it by the ideal-variety correspondence because the zero set of $|Q_iQ_j|$ is contained in the zero set of $|R_{k_1}\ldots R_{k_{d+1}}|$ and the polynomial $|Q_iQ_j|$ is irreducible \cite[Proposition~1.2]{Har92}. The analogous argument holds for distinct $i,j\in K_2$. Lastly, assume that $i\in T_1\setminus K_1$ and $j\in T_2\setminus K_2$. If $Q_i=Q_j$, then the points $R_{k_1},\ldots,R_{k_{d+1}}$ lie on a common hyperplane, that is, the osculating hyperplane $\pi_i=\pi_j$. This implies that $|R_{k_1}\ldots R_{k_{d+1}}|=0$. So, also in this case, $|Q_iQ_j|$ divides the left-hand side.

Now, the claimed equality holds, up to a constant, because the degree of the polynomial on the right-hand side is
\[
2\left(\binom{|K_1|}{2}+\binom{|K_2|}{2}+(d+1-|K_1|)(d+1-|K_2|)\right),
\]
which, after using the relation $|K_1|+|K_2|=d+1$, equals the degree of the left-hand side: $d(d+1)$.
This shows that
\begin{equation}
\label{eq:equality-up-to-constant}
|R_{k_1}\ldots R_{k_{d+1}}|=c\prod_{\substack{i,j\in K_1\\i< j}}|Q_iQ_j|\prod_{\substack{i,j\in K_2\\i< j}}|Q_iQ_j|\prod_{\substack{i\in T_1\setminus K_1\\j\in T_2\setminus K_2}}|Q_iQ_j|.
\end{equation}
for some constant $c$.

\textbf{Part 2.} We now show that $c=(-1)^{\binom{|K_1|}{2}+\binom{|K_2|}{2}}$. For this purpose, we can choose specific values for $a_i,b_i$. In particular, from now on, we suppose that $b_i=1$ for all $i$. Let $M(T_1,K_1,T_2,K_2)$ be the matrix whose columns are $R_{k_1},\ldots,R_{k_{d+1}}$. There are two cases to consider.

\textbf{The case $K_1=\emptyset$.} This implies that $T_2=K_2$ and \eqref{eq:equality-up-to-constant} becomes
\begin{equation}
\label{eq:case-K1-empty}
|M(T_1,\emptyset,T_2,T_2)|=c\prod_{\substack{i,j\in T_2\\i< j}}|Q_iQ_j|.
\end{equation}
We want to show that $c=(-1)^{\binom{|T_2|}{2}}$. We argue by induction on $d\geq2$. In the base case $d=2$ we have that $T_1=\{1,2,3\}$, $T_2=\{4,5,6\}$, and \eqref{eq:case-K1-empty} becomes:
\begin{displaymath}
\left| \begin{array}{ccc}
a_5a_6&a_4a_6&a_4a_5\\
a_5+a_6&a_4+a_6&a_4+a_5\\
1&1&1
\end{array} \right|=c\left| \begin{array}{cc}
a_4&a_5\\
1&1
\end{array} \right|\left| \begin{array}{cc}
a_4&a_6\\
1&1
\end{array} \right|\left| \begin{array}{cc}
a_5&a_6\\
1&1
\end{array} \right|,
\end{displaymath}
which is satisfied for $c=-1=(-1)^{\binom{|T_2|}{2}}$. Now assume the conclusion true for $d-1\geq2$ and let us prove it for $d$. By letting $a_{d+2}=0$, \eqref{eq:case-K1-empty} becomes
\[
a_{d+3}\ldots a_{2d+2}|M(T_1,\emptyset,T_2\setminus\{d+2\},T_2\setminus\{d+2\})|=c\prod_{\substack{j\in T_2\\d+2< j}}|Q_{d+2}Q_j|\prod_{\substack{i,j\in T_2\\d+2<i< j}}|Q_iQ_j|.
\]
The bracket in the left-hand side of the above equality can be replaced using the inductive hypothesis obtaining
\begin{align*}
a_{d+3}\ldots a_{2d+2}(-1)^{\binom{|T_2|-1}{2}}\prod_{\substack{i,j\in T_2\setminus\{d+2\}\\i< j}}|Q_iQ_j|&=c\prod_{\substack{j\in T_2\\d+2< j}}|Q_{d+2}Q_j|\prod_{\substack{i,j\in T_2\\d+2<i< j}}|Q_iQ_j|,\\
a_{d+3}\ldots a_{2d+2}(-1)^{\binom{|T_2|-1}{2}}&=c\prod_{\substack{j\in T_2\\d+2< j}}|Q_{d+2}Q_j|,\\
a_{d+3}\ldots a_{2d+2}(-1)^{\binom{|T_2|-1}{2}}&=c(-1)^da_{d+3}\cdots a_{2d+2},
\end{align*}
which is satisfied for $c=(-1)^{\binom{|T_2|}{2}}$ because $\binom{|T_2|}{2}=\binom{|T_2|-1}{2}+d$. This proves what we want for $K_1=\emptyset$.

\textbf{Arbitrary $K_1$.} This is done by induction on $|K_1|$. The base case is $K_1=\emptyset$, which was addressed above. So, let us assume the thesis for $|K_1|=p-1\geq0$ and let us prove it for $|K_1|=p$. Since $K_1\neq\emptyset$, then $K_2\neq T_2$. So, we can define
\[
i_0=\min\;K_1~\textrm{and}~j_0=\max\;T_2\setminus K_2.
\]
Let us set $a_{i_0}=a_{j_0}=0$. This implies that the left-hand side of \eqref{eq:equality-up-to-constant} becomes
\[
|R_{k_1}\ldots R_{k_{d+1}}|=|M(T_1,K_1,T_2,K_2)|=\prod_{i\in T_1\setminus\{i_0\}}a_i\cdot|M(T_1',K_1',T_2',K_2)|,
\]
where $T_1'=T_1\setminus\{i_0\}$, $K_1'=K_1\setminus\{i_0\}$, $T_2'=T_2\setminus\{j_0\}$. By using the inductive hypothesis on the matrix $|M(T_1',K_1',T_2',K_2)|$, we argue that
\[
|R_{k_1}\ldots R_{k_{d+1}}|=\prod_{i\in T_1'}a_i\cdot(-1)^{\binom{|K_1'|}{2}+\binom{|K_2|}{2}}\prod_{\substack{i,j\in K_1'\\i< j}}|Q_iQ_j|\prod_{\substack{i,j\in K_2\\i< j}}|Q_iQ_j|\prod_{\substack{i\in T_1'\setminus K_1'\\j\in T_2'\setminus K_2}}|Q_iQ_j|.
\]
Now, let us consider the right-hand side of \eqref{eq:equality-up-to-constant}, which with $a_{i_0}=a_{j_0}=0$ becomes
\begin{align*}
c&\prod_{\substack{i,j\in K_1\\i< j}}|Q_iQ_j|\prod_{\substack{i,j\in K_2\\i< j}}|Q_iQ_j|\prod_{\substack{i\in T_1\setminus K_1\\j\in T_2\setminus K_2}}|Q_iQ_j|\\
=c&\prod_{j\in K_1'}|Q_{i_0}Q_j|\prod_{\substack{i,j\in K_1'\\i< j}}|Q_iQ_j|\prod_{\substack{i,j\in K_2\\i< j}}|Q_iQ_j|\prod_{i\in T_1'\setminus K_1'}|Q_iQ_{j_0}|\prod_{\substack{i\in T_1'\setminus K_1'\\j\in T_2'\setminus K_2}}|Q_iQ_j|\\
=c&(-1)^{|K_1'|}\prod_{j\in K_1'}a_j\prod_{\substack{i,j\in K_1'\\i< j}}|Q_iQ_j|\prod_{\substack{i,j\in K_2\\i< j}}|Q_iQ_j|\prod_{i\in T_1'\setminus K_1'}a_i\prod_{\substack{i\in T_1'\setminus K_1'\\j\in T_2'\setminus K_2}}|Q_iQ_j|\\
=c&(-1)^{|K_1'|}\prod_{j\in T_1'}a_j\prod_{\substack{i,j\in K_1'\\i< j}}|Q_iQ_j|\prod_{\substack{i,j\in K_2\\i< j}}|Q_iQ_j|\prod_{\substack{i\in T_1'\setminus K_1'\\j\in T_2'\setminus K_2}}|Q_iQ_j|.
\end{align*}
By comparing the left-hand side and the right-hand side of \eqref{eq:equality-up-to-constant} after imposing $a_{i_0}=a_{j_0}=0$ and after simplifying the common factors, we obtain that
\[
(-1)^{\binom{|K_1'|}{2}+\binom{|K_2|}{2}}=c(-1)^{|K_1'|}\implies c=(-1)^{\binom{|K_1'|}{2}+|K_1'|+\binom{|K_2|}{2}}=(-1)^{\binom{|K_1|}{2}+\binom{|K_2|}{2}}.\qedhere
\]
\end{proof}

\begin{proposition}
\label{prop:Wdn-equations-are-satisfied}
Let $J\in\binom{[2d+2]}{d+4}$, $I=\{i_1<\ldots<i_6\}\in\binom{J}{6}$, and write $J\setminus I=\{j_1<\ldots<j_{d-2}\}$. Then, the associated equation $\psi_{I,J}=0$ of $W_{d,2d+2}$ is satisfied by the points $R_k$ for $k\in J$.
\end{proposition}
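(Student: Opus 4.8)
The plan is to feed each of the eight $(d+1)\times(d+1)$ brackets appearing in $\psi_{I,J}$ into the factorization of Lemma~\ref{lem:bracket-factorization}. Each bracket $|R_{k_1}\cdots R_{k_{d+1}}|$, with index set $A=\{k_1,\dots,k_{d+1}\}$, becomes a sign $(-1)^{\binom{|A_1|}{2}+\binom{|A_2|}{2}}$ (where $A_1=A\cap T_1$, $A_2=A\cap T_2$) times a monomial in the two-dimensional brackets $|Q_aQ_b|$. Substituting into $\psi_{I,J}$ turns it into a difference of two signed monomials in the $|Q_aQ_b|$, so it suffices to prove that these two monomials coincide as polynomials in $\kk[a_i,b_i]$. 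This splits into two independent checks: (i) for every unordered pair $\{a,b\}\subseteq[2d+2]$ the total exponent of $|Q_aQ_b|$ is the same in both monomials, and (ii) the two overall signs agree.

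Everything is governed by a single combinatorial observation about the two families of index triples arising from the two bracket monomials, which (writing only the subscripts $1,\dots,6$ of the $i$'s) are
\[
M_1=\bigl\{\{4,5,6\},\{2,3,6\},\{1,3,5\},\{1,2,4\}\bigr\},\quad M_2=\bigl\{\{3,5,6\},\{2,4,6\},\{1,4,5\},\{1,2,3\}\bigr\}.
\]
By direct inspection: (a) each $h\in\{1,\dots,6\}$ lies in exactly two triples of $M_1$ and in exactly two of $M_2$; and (b) each pair $\{h,h'\}$ lies in the same number of triples of $M_1$ as of $M_2$, namely $0$ for the three ``opposite'' pairs $\{1,6\},\{2,5\},\{3,4\}$ and $1$ for every other pair. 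I would isolate (a) and (b) as the technical heart of the argument.

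For (i) I would classify a pair $\{a,b\}$ by the type of each index --- a $j$ (in $J\setminus I$), an $i$ (in $I$), or an index outside $J$ --- and by whether $a,b$ lie in the same block $T_1/T_2$ or in different blocks. Reading off the three kinds of factors in Lemma~\ref{lem:bracket-factorization}, the pair $\{a,b\}$ contributes to a given bracket $A=S\cup\{j_1,\dots,j_{d-2}\}$ exactly when both $a,b$ lie in $A$ (same-block case) or both lie outside $A$ (cross case). Since every $j$ always lies in $A$ and every outside-index never does, the exponent in each monomial is a constant ($4$, $2$, or $0$) in every case except when $a,b$ are both among the six $i$'s, where it equals the co-occurrence count of $\{a,b\}$ in the relevant family of triples. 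By (a) and (b) this value is the same for $M_1$ and $M_2$ in every case, so all exponents match.

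Step (ii) is the main obstacle, because the two monomials carry different multisets of bracket sizes and so cannot be matched sign by sign. Writing $s(S)=|S\cap T_1|$ and letting $z_1,z_2$ be the (fixed) numbers of $j$'s in $T_1,T_2$, the total sign exponent of a monomial is $\sum_{S}\bigl[\binom{s(S)+z_1}{2}+\binom{3-s(S)+z_2}{2}\bigr]=\sum_S f(s(S))$ for a fixed polynomial $f$ of degree at most two in $s$. The key point is that, although the multisets $\{s(S):S\in M_1\}$ and $\{s(S):S\in M_2\}$ genuinely differ, their counts and first two power sums agree: $\sum_S s(S)=2|I\cap T_1|$ by (a), while $\sum_S s(S)^2$ is controlled by the pairwise co-occurrences and hence agrees by (b). Since $f$ has degree at most two, $\sum_S f(s(S))$ is determined by these moments, so the two sign exponents are equal (indeed as integers). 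Combining (i) and (ii), the two signed monomials coincide, whence $\psi_{I,J}=0$.
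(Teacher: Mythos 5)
Your argument is correct and, for the sign comparison, takes a genuinely different route from the paper. Step (i) is essentially the paper's Step 1: both reduce the matching of exponents for each $|Q_aQ_b|$ to a case analysis on where the pair $\{a,b\}$ sits relative to $T_1,T_2$, $I$ and $J\setminus I$, and both rest on your observations (a) and (b) about the two triple families. The real divergence is in step (ii). The paper computes the sign $(-1)^{\binom{|K_1|}{2}+\binom{|K_2|}{2}}$ of each of the eight brackets and then runs a case analysis on $I\cap T_1$: in all cases except $|I\cap T_1|=3$ the eight signs are matched in pairs, and the remaining case is settled by an explicit congruence between sums of binomial coefficients. Your moment argument subsumes all of these cases at once: the sign exponent of a bracket is $f(s)$ for a quadratic polynomial $f$ of $s=|S\cap T_1|$, and the multisets $\{s(S)\colon S\in M_1\}$ and $\{s(S)\colon S\in M_2\}$ have the same cardinality, the same first moment by (a), and the same second moment by (b), so the total exponents agree as integers. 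This is cleaner than the paper's treatment and explains \emph{why} its case-by-case identities hold; for instance, when $|I\cap T_1|=3$ the two multisets are $\{0,2,2,2\}$ and $\{1,1,1,3\}$, with equal sums $6$ and equal sums of squares $12$.

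There is one omitted verification. Lemma \ref{lem:bracket-factorization} is stated for brackets whose columns are indexed in increasing order, whereas the brackets in $\psi_{I,J}$ are written as $|i_4i_5i_6j_1\ldots j_{d-2}|$, etc., which are in general not sorted. Sorting the columns introduces a sign $(-1)^{q}$ per bracket, where $q$ is the number of inversions, and your identification of each bracket with ``a sign times a monomial'' silently assumes these reordering signs cancel within each of $m_{I,J}^{(1)}$ and $m_{I,J}^{(2)}$. They do: $q(i_a,i_b,i_c)=\sum_{x\in\{a,b,c\}}\#\{y\colon j_y<i_x\}$, and by your observation (a) each $i_h$ occurs in exactly two triples of each family, so the total over either family equals $2\sum_{h}\#\{y\colon j_y<i_h\}$, which is even. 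The paper verifies exactly this parity at the start of its Step 2; you should add the corresponding line, which again follows from the tools you have already isolated.
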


\begin{proof}
Recall that the equation $\psi_{I,J}$ evaluated in $R_k$ for $k\in J$ is equal to $m_{I,J}^{(1)}-m_{I,J}^{(2)}$, where:
\begin{align*}
m_{I,J}^{(1)}&=|i_4i_5i_6j_1\ldots j_{d-2}||i_2i_3i_6j_1\ldots j_{d-2}||i_1i_3i_5j_1\ldots j_{d-2}||i_1i_2i_4j_1\ldots j_{d-2}|,\\
m_{I,J}^{(2)}&=|i_3i_5i_6j_1\ldots j_{d-2}||i_2i_4i_6j_1\ldots j_{d-2}||i_1i_4i_5j_1\ldots j_{d-2}||i_1i_2i_3j_1\ldots j_{d-2}|.
\end{align*}
with the convention that $|i_4i_5i_6j_1\ldots j_{d-2}|=|R_{i_4}R_{i_5}R_{i_6}R_{j_1}\ldots R_{j_{d-2}}|$, and so on. The proof is divided into two steps.

\textbf{Step 1.} We show that $m_{I,J}^{(1)}/m_{I,J}^{(2)}$ is an integral power of $-1$. From Lemma~\ref{lem:bracket-factorization} we have that $m_{I,J}^{(i)}$ decomposes, up to a sign, into the product of appropriate brackets $|Q_iQ_j|$ with $i<j$. We show that the same factors $|Q_iQ_j|$ appear in the decompositions of $m_{I,J}^{(1)}$ and $m_{I,J}^{(2)}$. So, let $|Q_eQ_f|$ be a factor of $m_{I,J}^{(1)}$. By Lemma~\ref{lem:bracket-factorization} we have the following possibilities for $|Q_eQ_f|$:

\begin{enumerate}

\item Suppose $e,f\in T_1$. If $e,f\in J\setminus I$, then the factor $|Q_eQ_f|$ appears with multiplicity $4$ in both $m_{I,J}^{(1)}$ and $m_{I,J}^{(2)}$. If $e,f\in I$, then we have the following possibilities for $\{e,f\}$:
\begin{align*}
\{i_1,i_2\}&,~\{i_1,i_3\},~\{i_1,i_4\},~\{i_1,i_5\},~\{i_2,i_3\},~\{i_2,i_4\},\\
\{i_2,i_6\}&,~\{i_3,i_5\},~\{i_3,i_6\},~\{i_4,i_5\},~\{i_4,i_6\},~\{i_5,i_6\}.
\end{align*}
Then, $|Q_eQ_f|$ appears with multiplicity $1$ in both $m_{I,J}^{(1)}$ and $m_{I,J}^{(2)}$. If $e\in I$ and $f\in J\setminus I$ (and analogously if $f\in I$ and $e\in J\setminus I$), then $|Q_eQ_f|$ appears with multiplicity $2$ in $m_{I,J}^{(1)}$ as any index among $i_1,\ldots,i_6$ appears twice in the triplets $\{i_4,i_5,i_6\},\{i_2,i_3,i_6\},\{i_1,i_3,i_5\},\{i_1,i_2,i_4\}$. This is also true for $m_{I,J}^{(2)}$.

\item The case $e,f\in T_2$ is analogous to the one above.

\item Finally, suppose that $e\in T_1$ and $f\in T_2$. In this case, for $|Q_eQ_f|$ to be a factor of a bracket $|k_1\ldots k_{d+1}|$ of $m_{I,J}^{(1)}$, we must have that $e,f\notin\{k_1,\ldots,k_{d+1}\}$. As $J\setminus I\subseteq\{k_1,\ldots,k_{d+1}\}$, we have that $e,f\in I$. Now, for each possible pair $(e,f)$ with $e,f\in I$ and $e<f$, a direct check reveals that $\{e,f\}\cap\{k_1,\ldots,k_{d+1}\}=\emptyset$ for the same number of brackets in $m_{I,J}^{(1)}$ and $m_{I,J}^{(2)}$. (As an explicit example, for $(e,f)=(i_1,i_6),(i_2,i_5),(i_3,i_4)$, the intersection $\{e,f\}\cap\{k_1,\ldots,k_{d+1}\}$ is never empty for all the brackets in $m_{I,J}^{(1)}$ and $m_{I,J}^{(2)}$.)

\end{enumerate}

\textbf{Step 2.} We now show that $m_{I,J}^{(1)}/m_{I,J}^{(2)}=1$, which concludes the proof. Given a bracket $|i_ai_bi_cj_1\ldots j_{d-2}|$ as it appears in $m_{I,J}^{(1)}$ or $m_{I,J}^{(2)}$, let $q(i_a,i_b,i_c)$ be the number of adjacent transpositions of $i_a,i_b,i_c,j_1,\ldots,j_{d-2}$ which are needed to arrange them in increasing order $H_{abc}:=\{h_1<\ldots<h_{d+1}\}$. A direct check shows that
\begin{align*}
q(i_4,i_5,i_6)+q(i_2,i_3,i_6)+q(i_1,i_3,i_5)+q(i_1,i_2,i_4)&\equiv0~(\mathrm{mod}~2),\\
q(i_3,i_5,i_6)+q(i_2,i_4,i_6)+q(i_1,i_4,i_5)+q(i_1,i_2,i_3)&\equiv0~(\mathrm{mod}~2).
\end{align*}
Therefore, we have that
\begin{align*}
m_{I,J}^{(1)}=|H_{456}||H_{236}||H_{135}||H_{124}|,\\
m_{I,J}^{(2)}=|H_{356}||H_{246}||H_{145}||H_{123}|.
\end{align*}
With the notation of Lemma~\ref{lem:bracket-factorization}, given a bracket $|H_{abc}|$, we define its sign to be
\[
s(H_{abc}):=(-1)^{\binom{|K_1|}{2}+\binom{|K_2|}{2}}.
\]
We now distinguish several cases.

\noindent$\bullet$ $I\cap T_1=\emptyset$ or $I\cap T_1=I$. In these cases we have that the brackets in $m_{I,J}^{(1)},m_{I,J}^{(2)}$ share the same sign. This is due to the fact that for each bracket the number of labels in $T_1$ (resp. $T_2$) is the same.

\noindent$\bullet$ $I\cap T_1=\{i_1\},\{i_1,i_2\},\{i_1,i_2,i_3,i_4\}$, or $\{i_1,i_2,i_3,i_4,i_5\}$. In each one of these cases, we have that
\[
s(H_{456})=s(H_{356}),~
s(H_{236})=s(H_{246}),~
s(H_{135})=s(H_{145}),~
s(H_{124})=s(H_{123}).
\]
This implies that $m_{I,J}^{(1)}/m_{I,J}^{(2)}=1$.

\noindent$\bullet$ $I\cap T_1=\{i_1,i_2,i_3\}$. Define $p=|(J\setminus I)\cap T_1|$. For the brackets in $m_{I,J}^{(1)}$ we have that
\begin{align*}
s(H_{456})&=(-1)^{\binom{p}{2}+\binom{d+1-p}{2}},\\
s(H_{236})&=s(H_{135})=s(H_{134})=(-1)^{\binom{p+2}{2}+\binom{d-1-p}{2}}.
\end{align*}
For the brackets in $m_{I,J}^{(2)}$ we have
\begin{align*}
s(H_{356})&=s(H_{246})=s(H_{145})=(-1)^{\binom{p+1}{2}+\binom{d-p}{2}},\\
s(H_{123})&=(-1)^{\binom{p+3}{2}+\binom{d-2-p}{2}}.
\end{align*}
So, we need to verify that $A\equiv B~\mathrm{mod}~2$, where
\begin{align*}
A&:=\binom{p}{2}+\binom{d+1-p}{2}+3\left(\binom{p+2}{2}+\binom{d-1-p}{2}\right),\\
B&:=3\left(\binom{p+1}{2}+\binom{d-p}{2}\right)+\binom{p+3}{2}+\binom{d-2-p}{2}.
\end{align*}
This is equivalent to checking that, modulo $2$,
\[
\binom{p}{2}+\binom{p+2}{2}+\binom{d-1-p}{2}+\binom{d+1-p}{2}\equiv\binom{p+1}{2}+\binom{p+3}{2}+\binom{d-2-p}{2}+\binom{d-p}{2}.
\]
This follows after observing that, for any integer $\ell$, we have $\binom{\ell}{2}+\binom{\ell+2}{2}\equiv1~\mathrm{mod}~2$.
\end{proof}

\begin{proof}[Proof of Theorem~\ref{thm:gwt}]
This now follows at once after combining Proposition~\ref{prop:gwt+W} and Proposition~\ref{prop:Wdn-equations-are-satisfied}.
\end{proof}

\begin{example}
We give explicit instances of the factorization exhibited by Lemma~\ref{lem:bracket-factorization} and verify Proposition~\ref{prop:Wdn-equations-are-satisfied} directly on an example. Let us consider $d=3$ and the equation $\psi_{I,J}$ for the choice of indexes $J=\{1,\ldots,7\}$ and $I=\{1,\ldots,6\}$. This is given by
\[
\psi_{I,J}= |4567||2367||1357||1247| - |3567||2467||1457||1237|,
\]
with the usual convention that $|4567|$ denotes $|R_4R_5R_6R_7|$, and so on.
By Lemma~\ref{lem:bracket-factorization}, we have the following factorizations
\begin{align*}
|R_4R_5R_6R_7|&=-|Q_5Q_6||Q_5Q_7||Q_6Q_7||Q_1Q_8||Q_2Q_8||Q_3Q_8|\\
|R_2R_3R_6R_7|&=|Q_2Q_3||Q_6Q_7||Q_1Q_5||Q_1Q_8||Q_4Q_5||Q_4Q_8|\\
|R_1R_3R_5R_7|&=|Q_1Q_3||Q_5Q_7||Q_2Q_6||Q_2Q_8||Q_4Q_6||Q_4Q_8|\\
|R_1R_2R_4R_7|&=-|Q_1Q_2||Q_1Q_4||Q_2Q_4||Q_3Q_5||Q_3Q_6||Q_3Q_8|\\
|R_3R_5R_6R_7|&=-|Q_5Q_6||Q_5Q_7||Q_6Q_7||Q_1Q_8||Q_2Q_8||Q_4Q_8|\\
|R_2R_4R_6R_7|&=|Q_2Q_4||Q_6Q_7||Q_1Q_5||Q_1Q_8||Q_3Q_5||Q_3Q_8|\\
|R_1R_4R_5R_7|&=|Q_1Q_4||Q_5Q_7||Q_2Q_6||Q_2Q_8||Q_3Q_6||Q_3Q_8|\\
|R_1R_2R_3R_7|&=-|Q_1Q_2||Q_1Q_3||Q_2Q_3||Q_4Q_5||Q_4Q_6||Q_4Q_8|.
\end{align*}
Now, one can check directly that $\psi_{I,J}=0$, as claimed in Proposition~\ref{prop:Wdn-equations-are-satisfied}.
\end{example}

%-------------------------------------------------------------------------------------------

\subsection*{Acknowledgements}
We would like to thank Ciro Ciliberto for helpful conversations and Irmgard Wolf Tortarolo for providing us with an Italian translation of \cite{Hur82}. We also thank the anonymous referees for the valuable comments and suggestions.

%-------------------------------------------------------------------------------------------

\subsection*{Fundings}
{\small AC and EC acknowledge that this study was carried out within the PRIN 2022 PNRR projects grant P2022J4HRR ``Mathematical Primitives for Post Quantum Digital Signatures'' and grant P2022E2Z4AK ``0-Dimensional Schemes, Tensor Theory, and Applications'' funded by European Union – Next Generation EU  within the PRIN 2022 program (D.D. 104 - 02/02/2022 Ministero dell’Università e della Ricerca). 
AC is also supported by the PRIN2020 grant 2020355B8Y ``Squarefree Gr\"obner degenerations, special varieties and related topics'', by the MUR Excellence Department Project awarded to Dipartimento di Matematica, Università di Genova, CUP D33C23001110001, and by the European Union within the program NextGenerationEU.  Additionally, part of the work was done while visiting the Institute of Mathematics of the University of Barcelona (IMUB). He gratefully appreciates their hospitality during his visit.
LS is supported by the projects ``Programma per Giovani Ricercatori Rita Levi Montalcini'', PRIN2017SSNZAW ``Advances in Moduli Theory and Birational Classification'', PRIN2020KKWT53 ``Curves, Ricci flat Varieties and their Interactions'', and PRIN 2022 ``Moduli Spaces and Birational Geometry'' -- CUP E53D23005790006. The authors are members of the INdAM group GNSAGA.}

%-------------------------------------------------------------------------------------------
	
%BIBLIOGRAPHY

\end{document}